\documentclass[11pt,a4paper]{amsart}
\usepackage[T1]{fontenc}

\usepackage{mathtools}
\usepackage{pict2e}
\usepackage{enumitem}
\usepackage{latexsym}
\usepackage{mathrsfs}
\usepackage{textcomp}
\usepackage[table]{xcolor}
\usepackage{pifont}
\usepackage{multirow}
\usepackage{url}

\usepackage[all]{xy}

\usepackage{lmodern}
\usepackage{graphicx}
\usepackage{booktabs}
\usepackage{calc}
\usepackage{adjustbox}

\usepackage{tikz-cd}

\usepackage[hidelinks]{hyperref}

\setlist[enumerate, 1]{label=(\roman*), wide, itemsep=0pt, topsep=0pt}
\setlist[enumerate, 2]{label=(\arabic*), wide=2\parindent, itemsep=0pt, topsep=0pt}
\tikzset{myloop/.style={in=-30, out=30, looseness=7}} 
\tikzset{myloop2/.style={in=-150, out=150, looseness=7}} 

\theoremstyle{plain}
\newtheorem{theorem}{Theorem}[section]
\theoremstyle{plain}

\newtheorem{proposition}[theorem]{Proposition}
\newtheorem{lemma}[theorem]{Lemma}

\newtheorem{corollary}[theorem]{Corollary}

\theoremstyle{remark}
\newtheorem{remark}[theorem]{Remark}
\newtheorem{example}[theorem]{Example}
\theoremstyle{definition}
\newtheorem{definition}[theorem]{Definition}

\numberwithin{equation}{section}

\DeclarePairedDelimiter\gen{\langle}{\rangle}
\DeclarePairedDelimiter\Set{\{}{\}}

\DeclarePairedDelimiter\comp{\langle\!\langle}{\rangle\!\rangle}

\DeclareMathOperator{\characteristic}{char}

\DeclareMathOperator{\proj}{proj}

\DeclareMathOperator{\md}{mod}
\DeclareMathOperator{\Mod}{Mod}

\DeclareMathOperator{\rad}{rad}

\DeclareMathOperator{\g}{{\rm\bf g}}
\DeclareMathOperator{\Jacobi}{\mathcal{P}}
\DeclareMathOperator{\im}{im}

\DeclareMathOperator{\Ext}{Ext}

\DeclareMathOperator{\coker}{coker}

\let\hom\relax
\DeclareMathOperator{\hom}{Hom}
\DeclareMathOperator{\End}{End}

\DeclareMathOperator{\pr}{pr}
\DeclareMathOperator{\Dim}{\underline{dim}}

\DeclareMathOperator{\op}{op}

\DeclareMathOperator{\Dis}{Dis}

\newcommand{\Z}{\mathbb Z}

\newcommand{\Gam}{\Gamma}
\newcommand{\Lam}{\Lambda}
\newcommand{\bLam}{\overline{\Lam}}
\newcommand{\ebrace}[1]{\langle #1\rangle}
\newcommand{\fJ}{\mathfrak{J}}
\newcommand{\rk}{\operatorname{rank}}  

\begin{document}

\title[Minimal presentations for representations of pseudocompact algebras]{Minimal presentations of finite-dimensional representations of pseudocompact algebras}
\date{October 13, 2026}


\keywords{minimal projective and injective presentations, g-vectors, pseudocompact algebras}

\author{Christof Gei{\ss}}
\address{Christof Gei{\ss}\newline
Instituto de Matem\'aticas\newline
Universidad Nacional Aut{\'o}noma de M{\'e}xico\newline
Ciudad Universitaria\newline
04510 M{\'e}xico D.F.\newline
M{\'e}xico}
\email{christof.geiss@im.unam.mx}

\author{Carolin Hartung}
\address{Carolin Hartung\newline
Mathematisches Institut\newline
Universit\"at Bonn\newline
Endenicher Allee 60\newline
53115 Bonn\newline
Germany}
\email{hartung@math.uni-bonn.de}

\author{Daniel Labardini-Fragoso}
\address{Daniel Labardini-Fragoso\newline
Dipartimento di Matematica 'Tullio Levi-Civita'\newline
Università degli Studi di Padova\newline
Via Trieste, 63 - 35121 Padova\newline
Italy
}
\email{daniel.labardinifragoso@unipd.it}

\author{Jan Schr\"oer}
\address{Jan Schr\"oer\newline
Mathematisches Institut\newline
Universit\"at Bonn\newline
Endenicher Allee 60\newline
53115 Bonn\newline
Germany}
\email{schroer@math.uni-bonn.de}

\begin{abstract}
    We give a general formula for minimal projective presentations and for minimal discrete-injective presentations
    of finite-dimensional modules over basic pseudocompact algebras. We also show that the representation theoretic $g$-vector 
    of a module coincides with its DWZ-$g$-vector. These results generalize the description that Derksen, Weyman and Zelevinsky gave of minimal projective presentations for modules over certain finite-dimensional Jacobian algebras.

    In particular, we can drop their finiteness assumption and we may work over fields of arbitrary characteristic.
    
\end{abstract}

\maketitle
\tableofcontents

\section{Introduction and main results}

\subsection{Notation} \label{ssec:notn}
We fix the following notation: Throughout, $K$ denotes a field. 
Let $Q=(Q_0, Q_1, s,t)$ be a finite quiver, and let
$\Gam \coloneq K\comp{Q}$ be the completed path algebra of $Q$ with Jacobson radical 
$\mathfrak{m}$. Note that the ideal $\mathfrak{m}$ is generated by the set of arrows. 
For an ideal $\mathfrak{J}\coloneq\ebrace{R} \subseteq \mathfrak{m}^2\subset \Gam$, the algebra 
$\Lam \coloneq \Gam/\mathfrak{J}$ is basic and possibly of infinite dimension. Moreover, let $\bLam \coloneq \Gam/\overline{\mathfrak{J}}$, where $\overline{\mathfrak{J}}$ is the closure of 
$\mathfrak{J}$ with respect to the $\mathfrak{m}$-adic topology. Then 
$\bLam$ is a basic pseudocompact algebra in the sense of Gabriel \cite{GabrielP.1962Dca}. See Section~2 of~\cite{SimsonDaniel2001Ccpa} for
a gentle introduction to pseudocompact algebras and their categories of
pseudocompact resp. discrete modules.

For a $K$-algebra $B$ let ${\rm mod}(B)$ be the category of finite-dimensional $B$-modules.
The category of all $B$-modules is denoted by $\Mod(B)$.

By a \emph{module} we always mean a finite-dimensional left module, unless stated otherwise.

For the algebras defined above we have ${\rm mod}(\Lam) = {\rm mod}(\bLam)$.
On the other hand, the categories $\Mod(\Lam)$ and $\Mod(\bLam)$ do not coincide in general.

Note that the indecomposable projective or injective $\Lam$-modules or
$\bLam$-modules are possibly of infinite dimension. 
We write quiver representations of the form $M=(M(i),M(\alpha))_{i \in Q_0, \alpha \in Q_1}$. Moreover, we identify representations of quivers with modules over the corresponding path algebra.
The finite-dimensional nilpotent representations of quivers correspond then to finite-dimensional modules over the completed
path algebra.

For tensor products over our field $K$ we often write $\otimes$ instead of $\otimes_K$.

\subsection{Representation theoretic $g$-vectors}
With 
$\bLam = \Gam/\overline{\mathfrak{J}}$ as in Section~\ref{ssec:notn}.
Let $n = |Q_0|$ be the number of vertices of $Q$.
The algebra $\bLam$ has then $n$ indecomposable projective modules, up to isomorphism,
namely
$P_i \coloneq \bLam e_i$ with $i \in Q_0$ and $e_i$ the corresponding idempotent path of length $0$.
The module $P_i$ is the projective cover of the simple $\bLam$-module $S_i$ associated to $i$.
Let
${\rm proj}(\bLam)$ be the category of finitely generated projective $\bLam$-modules.
The Grothendieck group $K_0(\proj(\bLam))$ can be identified with $\Z^n$ and has the classes
$[P_i]$ with $i \in Q_0$ as a $\Z$-basis.

For $i \in Q_0$ let $I_{i,{\rm dis}} \coloneq D_2(e_i\bLam)$ be the corresponding discrete
indecomposable injective module. (The duality $D_2$ is defined in \cite{SimsonDaniel2001Ccpa}.)

Each module in ${\rm mod}(\bLam)$ is finitely presented.
For $M \in {\rm mod}(\bLam)$ let
\[
\bigoplus_{i \in Q_0} P_i^{a_i} \to \bigoplus_{i \in Q_0} P_i^{b_i} \to M \to 0
\]
be a minimal projective presentation.
Then the \emph{representation theoretic $g$-vector} $\g^{\rm proj}(M)$ is defined as
$$
\g^{\rm proj}(M)\coloneq \g_{\bLam}^{\rm proj}(M) \coloneq (a_i-b_i)_{i \in Q_0}.
$$

Dually, for $M$ in ${\rm mod}(\bLam)$
let
\[
0 \to M \to \bigoplus_{i \in Q_0} I_{i,{\rm dis}}^{c_i} \to \bigoplus_{i \in Q_0} 
I_{i,{\rm dis}}^{d_i} 
\]
be a minimal injective presentation in the category ${\rm Dic}(\bLam)$
of discrete $\bLam$-modules. 
Note that this might differ from the minimal
injective presentation in $\Mod(\bLam)$.
Then let 
$$
\g^{\rm inj}(M) \coloneq \g_{\bLam}^{\rm inj}(M) \coloneq (d_i-c_i)_{i \in Q_0}.
$$

\subsection{Jacobian algebras and projective presentations} To understand the main result of this paper, it is helpful if we first outline how to construct minimal projective presentations (resp. minimal injective presentations) for finite-dimensional modules over Jacobian algebras. 

Let $B \coloneq \bLam\coloneq \Jacobi_K(Q,W)$ be a Jacobian algebra in the sense of 
\cite[Def. 3.1]{DWZ08}. Thus,  $B$ is in particular a basic pseudocompact algebra in the sense. The construction of minimal projective presentations (resp. minimal injective presentations) over certain finite-dimensional Jacobian algebras 
goes back to the celebrated paper~\cite[Sec.~10]{DWZ08}. In this paper Derksen, Weyman, and Zelevinsky introduced for each 
$M \in {\rm mod}(B)$ and for each vertex $k \in Q_0$ an elementary defined complex of vector spaces
\begin{equation}
    \label{eq:complex:jacobian}
    0 \rightarrow M(k) \xrightarrow{M(\beta_k)} M_{\text{out}}(k) \xrightarrow{M(\gamma_k)} M_{\text{in}}(k) \xrightarrow{M(\alpha_k)} M(k) \rightarrow 0.
\end{equation}
This depends functorially on $M$. It is not necessarily exact. Let us set
\begin{align*}
    h_k^{(0)}(M) &\coloneq \dim\ker(M(\beta_k)), \\
    h_k^{(1)}(M) &\coloneq \dim\ker(M(\gamma_k))/\im(M(\beta_k)), \\
    h_k^{(2)}(M) &\coloneq \dim\ker(M(\alpha_k))/\im(M(\gamma_k)), \\
    h_k^{(3)}(M) &\coloneq \dim\coker(M(\alpha_k)).
\end{align*}
By setting
\begin{align*}
    \g_k^{\rm in}(M) &\coloneq h_k^{(2)}(M) - h_k^{(3)}(M) = \dim\coker(M(\gamma_k))-\dim M(k), \\
    \g_k^{\rm out}(M) &\coloneq h_k^{(1)}(M) - h_k^{(0)}(M) = \dim\ker(M(\gamma_k))-\dim M(k),
\end{align*}
we obtain the \emph{$g$-vector} $$\g^{\rm in}(M) \coloneq \g_{\bLam}^{\rm in}(M) \coloneq (\g_k^{\rm in}(M))_{k \in Q_0}.$$ 
Note that in \cite{DWZ10} the $g$-vector is a key device for the additive categorification of skew-symmetric cluster algebras.
We also call $\g^{\rm in}(M)$ the \emph{DWZ-$g$-vector} of $M$.
Let $$\g^{\rm out}(M) \coloneq \g_{\bLam}^{\rm out}(M) \coloneq (\g_k^{\rm out}(M))_{k \in Q_0}.$$

Warning: In fact, the g-vector $\g(M)$ of~\cite{DWZ08} is our
vector $\g^{\rm out}(M)$.

For certain finite-dimensional Jacobian algebras $B$,
Derksen--Weyman--Zelevinsky \cite{DWZ10} show that
the DWZ-$g$-vector $\g^{\rm in}(M)$ coincides with the representation theoretic $g$-vector $\g^{\rm proj}(M)$ 
for each $M \in {\rm mod}(B)$.

Namely, they construct from the above complex in an elementary way a minimal projective presentation of the form
\begin{equation}
    \label{eq:min:proj:pres:jacobian}
    \bigoplus_{k \in Q_0} (B e_k)^{h_k^{(2)}(M)} \rightarrow \bigoplus_{k \in Q_0} (B e_k)^{h_k^{(3)}(M)} \rightarrow M \rightarrow 0.
\end{equation}
By duality, the authors also obtain a minimal injective presentation
\begin{equation}
    \label{eq:min:inj:copres:jacobian}
    0 \rightarrow M \rightarrow \bigoplus_{k \in Q_0} D(e_k B)^{h_k^{(0)}(M)} \rightarrow \bigoplus_{k \in Q_0} D(e_k B)^{h_k^{(1)}(M)}.
\end{equation}
In particular, as mentioned above, it follows for those Jacobian algebras that
\begin{equation*}
    \g^{\rm proj}(M) = \g^{\rm in}(M) \text{ and } \g^{\rm inj}(M) = \g^{\rm out}(M).
\end{equation*}

\subsection{Main result}
One of our main aims is to define DWZ-$g$-vectors $\g^{\rm in}(M)$ for finite-dimensional modules $M$
over arbitrary basic pseudocompact algebras $\bLam$ as defined in Section~\ref{ssec:notn} and to show that these coincide with the 
representation theoretic $g$-vectors $\g^{\rm proj}(M)$.
In particular, the finite-dimension hypothesis made in \cite[eq. 10.3]{DWZ10} can be dropped.

 In order to generalize (\ref{eq:complex:jacobian}) to the setting of Section~\ref{ssec:notn}, we construct two complexes in terms of generators of $\mathfrak{J}$, namely
\begin{equation*}
    M'_{\text{rel}}(k) \xrightarrow{M(\gamma'_k)} M_{\text{in}}(k) \xrightarrow{M(\alpha_k)} M(k) \rightarrow 0,
\end{equation*}
and
\begin{equation*}
    0 \rightarrow M(k) \xrightarrow{M(\beta_k)} M_{\text{out}}(k) \xrightarrow{M(\gamma''_k)} M''_{\text{rel}}(k). 
\end{equation*}
In the case of $\bLam = \Jacobi_K(Q,W)$ these specialize to the right and left side of the complex in (\ref{eq:complex:jacobian}). Accordingly we define now the numbers $h_k^{(i)}(M)$ as above, with the obvious modifications
\begin{align*}
    h_k^{(1)}(M) &\coloneq \dim\ker(M(\gamma''_k))/\im(M(\beta_k)) \text{ and } \\
    h_k^{(2)}(M) &\coloneq \dim\ker(M(\alpha_k))/\im(M(\gamma'_k)).
\end{align*}
Also the DWZ-$g$-vectors $\g^{\rm in}(M)$ and $\g^{\rm out}(M)$ are defined as before by
\begin{align*}
    \g_k^{\rm in}(M) &\coloneq h_k^{(2)}(M) - h_k^{(3)}(M), \\
    \g_k^{\rm out}(M) &\coloneq h_k^{(1)}(M) - h_k^{(0)}(M).
\end{align*}
This leads to our main result, which generalizes the construction of the minimal projective presentation in (\ref{eq:min:proj:pres:jacobian}):

\begin{theorem}\label{thm:main1}
    Let $\mathfrak{J} \subseteq \mathfrak{m}^2$ be an ideal of $\Gam = K\comp{Q}$, and let $M$ be a finite-dimensional representation of $\Lam = \Gam/\mathfrak{J}$. Then
    $M$ has a minimal projective presentation of the form
    \begin{equation}\label{eq:main1a}
        \bigoplus_{k \in Q_0} (\Lam e_k)^{h_k^{(2)}(M)} \rightarrow \bigoplus_{k \in Q_0} (\Lam e_k)^{h_k^{(3)}(M)} \rightarrow M \rightarrow 0
    \end{equation}
    in $\Mod(\Lam)$.
In particular, we have \[\g^{\rm proj}(M) = \g^{\rm in}(M).\]    
Let $\bLam\coloneq \Gam/\overline{\mathfrak{J}}$, and let
\[ D_2\colon {\rm PC}(\bLam^{\rm op}) \to {\rm Dis}(\bLam) \]
denotes the (continuous) duality between the category of pseudocompact $\bLam^{\rm op}$-modules and the category of 
discrete $\bLam$-modules.
Then the finite-dimensional representation $M$ descends to $\bLam$, and we find a 
minimal projective presentation 
\begin{equation}\label{eq:main1b}
        \bigoplus_{k \in Q_0} (\bLam e_k)^{h_k^{(2)}(M)} \rightarrow \bigoplus_{k \in Q_0} (\bLam e_k)^{h_k^{(3)}(M)} \rightarrow M \rightarrow 0
    \end{equation} 
    in ${\rm PC}(\bLam)$ (and $\Mod(\bLam)$)
and also a minimal
injective presentation
\begin{equation}\label{eq:main1c}
0\rightarrow M\rightarrow \bigoplus_{k\in Q_0} D_2(e_k \bLam)^{h^{(0)}_k(M)}
\rightarrow  \bigoplus_{k\in Q_0} D_2(e_k \bLam)^{h^{(1)}_k(M)}
\end{equation}
in ${\rm Dis}(\bLam)$.
In particular, we have 
$$
\g^{\rm inj}(M) = \g^{\rm out}(M).
$$
\end{theorem}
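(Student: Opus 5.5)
We first fix the data behind the complexes. Choose a set $R=\{\rho_j\}$ of generators of $\mathfrak{J}$, each with $\rho_j\in e_{t(j)}\mathfrak{J}e_{s(j)}$; minimality of the presentation will only use the image of $R$ in $\mathfrak{J}/(\mathfrak{m}\mathfrak{J}+\mathfrak{J}\mathfrak{m})$. For a finite-dimensional $M$ only finitely many generators matter, because $\mathfrak{m}^N M=0$ for some $N$. Each element $\rho\in e_k\Gam$ can be written $\rho=\sum_{\alpha:\,t(\alpha)=k}\alpha\,\partial_\alpha^{\rm r}(\rho)$, where $\partial^{\rm r}_\alpha$ denotes the left cyclic derivative (strip the last arrow). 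Define $M_{\rm in}(k)=\bigoplus_{t(\alpha)=k}M(s(\alpha))$ and $M(\alpha_k)=(M(\alpha))_\alpha$. Let $M'_{\rm rel}(k)=\bigoplus_{\rho\in e_kR}M(s(\rho))$, and let $M(\gamma'_k)$ have components $M(\partial^{\rm r}_\alpha\rho)$; these are well defined because $M$ is nilpotent. The relation $\rho\cdot M=0$ gives $M(\alpha_k)M(\gamma'_k)=0$. The complex $M(k)\to M_{\rm out}(k)\to M''_{\rm rel}(k)$ is defined dually, using right derivatives of the $\rho\in R e_k$.

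The plan for \eqref{eq:main1a} is to compute the minimal presentation via the standard description: the top $\bigoplus_k(\Lam e_k)^{\dim (M/\mathfrak{m}M)(k)}$ has multiplicity $\dim\coker M(\alpha_k)=h^{(3)}_k(M)$, since $\mathfrak{m}M(k)=\im M(\alpha_k)$. We then show that the syzygy $\Omega M=\ker(P_0\to M)$ has top $\Omega M/\mathfrak{m}\Omega M$ with $e_k$-part of dimension $h^{(2)}_k(M)$. To do this, write $P_0=\Lam\otimes_S V$ with $V=\bigoplus_k e_k V$ a chosen complement of $\mathfrak{m}M$, and let $\pi\colon P_0\to M$. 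Use the exact sequence of $S$-bimodules ($S=\prod_k K$)
\[
\Lam\otimes_S K R\otimes_S\Lam\to\Lam\otimes_S KQ_1\otimes_S\Lam\to\Lam\otimes_S\Lam\to\Lam\to 0 ,
\]
the beginning of the standard bimodule resolution. Its exactness in the completed setting with $\mathfrak{J}\subseteq\mathfrak{m}^2$ is the classical Butler--King type statement, and we prove it by passing to the truncations $\Gam/\mathfrak{m}^N$. Tensoring with $M$ over $\Lam$ gives a projective presentation $\bigoplus_{\rho}\Lam e_{t\rho}\otimes M(s\rho)\to\bigoplus_\alpha\Lam e_{t\alpha}\otimes M(s\alpha)\to\bigoplus_k\Lam e_k\otimes M(k)\to M\to0$, which is not minimal. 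Minimizing it means splitting off trivial summands. Concretely, the top of the kernel at vertex $k$ is $\ker M(\alpha_k)$, generated by $\mathfrak{m}$-linear combinations, modulo the contributions from $\mathfrak{m}\Omega M$, and those contributions come from two sources: the relations, which give $\im M(\gamma'_k)$, and the elements already in $\mathfrak{m}\cdot(\text{generators})$. The precise identification is that the composite $\Omega M\to\Omega M/\mathfrak{m}\Omega M$ at vertex $k$ is computed by
\[
(\Omega M/\mathfrak{m}\Omega M)(k)\;\cong\;\ker M(\alpha_k)/\im M(\gamma'_k).
\]
To prove it, we map $\ker M(\alpha_k)\ni (m_\alpha)$ to the class of $\sum_\alpha \alpha\otimes \tilde m_\alpha-\widetilde{\textstyle\sum\alpha m_\alpha}$ after lifting the $m_\alpha$ to $P_0$, and then check surjectivity, together with the fact that the kernel is exactly $\im M(\gamma'_k)$. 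That last check, that the kernel is exactly $\im M(\gamma'_k)$, is the expected main obstacle. It uses that $\Omega M\cap \mathfrak{m}^2P_0$-type elements reduce, via $\rho=\sum\alpha\partial^{\rm r}_\alpha\rho$, to images of relations, and it needs $\mathfrak{J}\subseteq\mathfrak{m}^2$ and the generation $\mathfrak{J}=\langle R\rangle$. I would carry this out by working in $\Gam$ and using $\mathfrak{m}=\bigoplus_\alpha \alpha\Gam$ (a free decomposition on the left, since $\Gam$ is a completed tensor algebra). The $g$-vector equality then follows formally.

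For \eqref{eq:main1b}, since $M$ is annihilated by $\mathfrak{m}^N$ and $\overline{\mathfrak J}\subseteq\mathfrak J+\mathfrak{m}^N$, $M$ is a $\bLam$-module. The right derivatives of elements of $\overline{\mathfrak J}$ act on $M$ like those of $\mathfrak J$, so the $h^{(i)}_k$ computed over $\bLam$ agree with those computed over $\Lam$. We then run the same argument in ${\rm PC}(\bLam)$, where the tops are the same because $\bLam\otimes_\Lam-$ applied to finitely generated projectives preserves $\mathfrak{m}$-tops. For \eqref{eq:main1c}, we apply the result for $\bLam^{\rm op}$, whose quiver is $Q^{\rm op}$ and whose ideal is generated by the opposite relations, to the right module $DM$. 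This yields a minimal projective presentation in ${\rm PC}(\bLam^{\rm op})$ with multiplicities $h^{(2)}_k(DM),h^{(3)}_k(DM)$. Dualizing the DWZ complex identifies these with $h^{(1)}_k(M),h^{(0)}_k(M)$, since the in/rel side for $DM$ is the transpose of the out/rel side for $M$. Applying the duality $D_2$ then turns this presentation into \eqref{eq:main1c} in ${\rm Dis}(\bLam)$, because $D_2(e_k\bLam)=I_{k,\rm dis}$ and $D_2$ preserves minimality. This gives $\g^{\rm inj}(M)=\g^{\rm out}(M)$.
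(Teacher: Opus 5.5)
Your route is genuinely different from the paper's. Once the step you flag is written out it works, and it is in fact more elementary.

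The paper never computes the top of a syzygy. It tensors the Butler--King sequence $0\to\mathfrak{J}/\mathfrak{J}^2\to\Lam\hat\otimes_S A\hat\otimes_S\Lam\to\Lam\hat\otimes_S\Lam\to\Lam\to 0$ with $M$ to get the standard presentation $P_M\xrightarrow{d_M}K_M\to M\to 0$, and splits off $P_M^{(0)}\cong K_M^{(1)}$ by a Nakayama/Krull--Remak--Schmidt argument. It then uses the full exactness $\ker d_M=\im c_M$, together with $c_M(e_k\otimes\rho\otimes m)\equiv e_k\otimes M(\gamma_k')m$ modulo $\mathfrak{m}P_M$, to get $\ker\overline{d}_M\subseteq\mathfrak{m}P_M^{(1)}$. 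This produces the differential explicitly.

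You instead compute $e_k(\Omega M/\mathfrak{m}\Omega M)$ directly via $\Phi\colon(m_\alpha)\mapsto[\sum_\alpha\alpha\tilde m_\alpha]$. The inclusion $\ker\Phi\subseteq\im M(\gamma_k')$ plays exactly the role of that exactness, but it needs only $e_k\mathfrak{m}=\bigoplus_{t(\alpha)=k}\alpha\Gam$, the maps $l_\alpha$, and the fact that elements of $\mathfrak{J}$ are finite sums $\sum a\rho b$. You get the multiplicities, which is all the statement requires, but no explicit maps.

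The check you postpone is short:
\begin{enumerate}
\item Suppose $\sum_\alpha\alpha\tilde m_\alpha\in e_k\mathfrak{m}\Omega M=\sum_{t(\alpha)=k}\alpha\,\Omega M$. Change the lifts by elements of $\Omega M$ so that $\sum_\alpha\alpha\tilde m_\alpha=0$ in $P_0=\bigoplus_j\Lam e_{i_j}$.
\item Lift the coordinates to $u_{\alpha,j}\in e_{s(\alpha)}\Gam e_{i_j}$. For each $j$ this gives $\sum_\alpha\alpha u_{\alpha,j}=\sum_r a_r\rho_r b_r$ with $\rho_r\in R$.
\item Write $a_r=\lambda_r e_k+a_r'$ with $a_r'\in\mathfrak{m}$ and apply $l_\alpha$. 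Since $l_\alpha(a_r'\rho_rb_r)=l_\alpha(a_r')\rho_rb_r\in\mathfrak{J}$, this yields $u_{\alpha,j}\equiv\sum_r\lambda_r l_\alpha(\rho_r)b_r$ modulo $\mathfrak{J}$.
\item Applying $\pi$ puts $(m_\alpha)$ in $\im M(\gamma_k')$.
\end{enumerate}
The reverse inclusion holds because the lifts $l_\alpha(\rho)\tilde m$ give $\sum_\alpha\alpha\, l_\alpha(\rho)\tilde m=\rho\tilde m=0$. Only $\mathfrak{J}\subseteq\mathfrak{m}$ is used here.

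A few smaller repairs are needed.
\begin{enumerate}
\item The term $\widetilde{\sum\alpha m_\alpha}$ in your formula for $\Phi$ is a lift of $0$ and should be dropped. An arbitrary such lift would make $\Phi$ ill-defined modulo $\mathfrak{m}\Omega M$.
\item To pass from the top of $\Omega M$ to its projective cover you need $\Omega M$ finitely generated, and $\Lam$ need not be noetherian. This holds because $\mathfrak{m}^NP_0\subseteq\Omega M$, the left ideal $\mathfrak{m}^N=\sum_{|p|=N}\Gam p$ is finitely generated, and $P_0/\mathfrak{m}^NP_0$ is finite-dimensional.
\item The bimodule sequence you write with the algebraic tensor product $\otimes_S$ is not exact in general. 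For $\Lam=K[[x]]$, the element $f\otimes 1-1\otimes f$ lies in the ideal generated by $x\otimes1-1\otimes x$ only if $f$ is rational, and passing to truncations cannot fix this. That is why the paper uses $\hat\otimes_S$. Your argument never uses this sequence, so simply delete it.
\item For $\bLam$, preservation of tops of projectives under base change is not the reason. What you need is that the syzygies have the same top, which follows from $\overline{\mathfrak{J}}P_0\subseteq\mathfrak{m}^{N+1}P_0\subseteq\mathfrak{m}\Omega M$. Alternatively, invoke your observation that $\im M(\overline{\gamma}_k')=\im M(\gamma_k')$.
\end{enumerate}
The dualization via $D_2$ giving the injective presentation is the same as in the paper.
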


The proof follows closely the ideas of the proof of \cite[Prop. 10.4]{DWZ10}, however we use the (non-commutative) Nakayama Lemma instead of the nilpotency of the Jacobson radical assumed in \cite[eq. 10.3]{DWZ10}. Moreover, we need to extract from \cite{BK99} a suitable version of the beginning of a standard bimodule resolution, namely for possibly infinite-dimensional quotients of completed path algebras.

The minimal injective presentation (\ref{eq:main1c})
is a generalized analogue of the minimal injective presentation constructed in
    \cite[Rem. 10.8]{DWZ10}.

\begin{remark} 
This last result clarifies and sharpens \cite[Lemma 3.1]{CLS15}. Note that our proof here is quite direct and elementary, whilst the proof in \cite{CLS15} is based on difficult results of Plamondon \cite[Lemma 4.7, Proposition 4.8]{PlamondonPierre-Guy2011Cavc} on cluster categories with infinite-dimensional Hom-spaces.
\end{remark}


Each finite-dimensional representation $M$ of $\Lam$ is \emph{nilpotent},
\emph{i.e.} there exists a $l_0$, depending on $M$, such that 
$\mathfrak{m}^l M = 0$ for all $l\geq l_0$.  
Consider now the finite-dimensional algebra 
\[
\Lam_l \coloneq \Gam/(\mathfrak{J}+\mathfrak{m}^l).
\]
We call this a \emph{truncation} of $\Lam$.
Then $M$ obviously
descends to a representation of $\Lam_l$ for $l\geq l_0$, and
we can apply Theorem~\ref{thm:main1} with $\Lam_l$ in place of $\Lam$.
The precise statement of our result in Section~\ref{sec:main} shows
that, for $l> l_0$, the invariants $h_k^{(i)}(M)$ do not depend
on $l$.  In particular the DWZ-$g$-vectors $\g^{\rm in}(M)$ and $\g^{\rm out}(M)$ can be calculated
already with any truncation parameter $l > l_0$,  see Remark~\ref{remark:dim:min:proj} for more details. 

The truncation technique for studying $g$-vectors of finite-dimensional representations 
is not new and can already be found in \cite{CLS15}.

\section{Completed path algebras and the Nakayama Lemma}

\subsection{Completed path algebras and relations}
\label{section:completed:path:algebras}
We fix the following notation: Let $K$ be a field, and let $Q=(Q_0,Q_1,s,t)$ be a finite quiver with vertex set $Q_0 = \Set{1, \dots, n}$ and arrow set $Q_1$. The functions $s,t: Q_1 \rightarrow Q_0$ determine the start and terminal vertex of each arrow, i.e. an arrow $\alpha \in Q_1$ starts in $s(\alpha)$ and ends in $t(\alpha)$. Let $e_1, \dots, e_n$ be the paths of length $0$ in $Q$. They form a complete set of pairwise orthogonal primitive idempotents in $A$. 

The path algebra of $Q$ is denoted by $KQ$. As before, let $\Gam = K\comp{Q}$ be the \emph{completed path algebra} of $Q$. As a $K$-vector space we have
\begin{equation*}
    \Gam = \prod_{m \geq 0}KQ_m
\end{equation*}
where $KQ_m$ is the $K$-vector space with a basis labeled by the paths of length $m$ in $Q$. The multiplication of $KQ$ and $\Gam$ is induced by concatenation of paths.
Thus, the elements of $\Gam$ are (possibly infinite) $K$-linear combinations of paths in $Q$. Moreover, both algebras are naturally graded by the length of paths.

We view $\Gam$ as a topological space with respect to the $\mathfrak{m}$-adic topology, where the (two-sided) ideal $\mathfrak{m} \subset \Gam$ is given by
\begin{equation}
    \label{eq:ideal:m}
    \mathfrak{m} \coloneq \prod_{m \geq 1} KQ_m,
\end{equation}
called the \emph{arrow ideal} of $\Gam$. To be precise, for any subset $U \subseteq \Gam$ let 
\begin{equation*}
    \overline{U} \coloneq  \bigcap_{p \geq 0} (U + \mathfrak{m}^p)
\end{equation*}
be the \emph{$\mathfrak{m}$-adic closure} of $U$.

Let us give another description of path algebras and completed path algebras by identifying them with certain tensor algebras. 
For this we set
\begin{equation*}
    S \coloneq \prod_{i \in Q_0} Ke_i \text{\quad and \quad} A \coloneq \bigoplus_{\alpha \in Q_1}K\alpha.
\end{equation*}
By definition, $S$ is a commutative semisimple $K$-algebra. We can define a $S$-$S$-bimodule structure on $A$ by setting $e_i\alpha e_j=\delta_{i,t(\alpha)}\alpha\delta_{s(\alpha),j}$. 
For $l \ge 0$ let
\[
A^{\otimes_S^l} \coloneq A \otimes_S \cdots \otimes_S A
\]
be the tensor product of $l$ copies of $A$.
We can then identify the tensor algebra
\begin{equation*}
    \Gam^0 \coloneq T_S(A) = \bigoplus_{l\geq 0} A^{\otimes_S^l}
\end{equation*}
with the path algebra $KQ$. The completed tensor algebra
\begin{equation*}
    \Gam \coloneq \hat{T}_S(A) = \prod_{l\geq 0} A^{\otimes_S^l}
\end{equation*}
is identified with the completed path algebra $\Gam$. The arrow ideal gets identified with 
\begin{equation*}
    \mathfrak{m} \coloneq \prod_{l\geq 1} A^{\otimes_S^l} \subset \Gam
\end{equation*}
which is also the Jacobson radical of $\Gam$. As mentioned above, when $\Gam$ is equipped with the $\mathfrak{m}$-adic topology, $\Gam$ is a basic pseudocompact algebra, see 
\cite[Section 3]{GabrielP.1962Dca} and \cite[Section 7]{GabrielP.1973IrII}. Note that 
$\Gam = S \oplus \mathfrak{m}$ as a $S$-$S$-bimodule.

\begin{example}
    The standard example for completed path algebras is the following: Consider the quiver
    \begin{equation*}
        \begin{tikzcd}[sep=scriptsize]
            1 \ar["\alpha", loop right, myloop]
        \end{tikzcd}.        
    \end{equation*}
    We have $Q_0=\Set{1}$ and $Q_1=\Set{\alpha}$. Therefore, we obtain 
    $KQ=K[\alpha]$ and $\Gam = K\comp{Q} = K[[\alpha]]$, the algebra of formal power series.
\end{example}

Next consider a set of \emph{relations} $R \subseteq \mathfrak{m}^2$. We may assume that $\rho=e_{t(\rho)}\rho e_{s(\rho)}$ for all $\rho \in R$. Consider the ideal $\mathfrak{J}=\gen{\rho \mid \rho \in R}$ of $\Gam$ which is generated by the elements of $R$. This is trivially contained in $\mathfrak{m}^2$. 
Moreover, for every $l\geq 0$ we consider the ideals 
\[
\mathfrak{J}_l \coloneq \mathfrak{J} + \mathfrak{m}^l.
\]
The $\mathfrak{m}$-adic closure is then defined to be 
\[
\overline{\mathfrak{J}} \coloneq \bigcap_{l\geq 2} \mathfrak{J}_l.
\]
The algebra $\bLam \coloneq \Gam/\overline{\mathfrak{J}}$ is then also pseudocompact and basic. Moreover, the truncations
\[
\Lam_l \coloneq \Gam/\mathfrak{J}_l
\] 
of $\Lam$
are finite-dimensional and the Jacobson radical $J$ of $\Lam \coloneq \Gam/\mathfrak{J}$ is given by $\mathfrak{m}/\mathfrak{J}$.

\begin{example}
    Jacobian algebras $\Jacobi_K(Q,W)$ are of the form $\bLam = \Gam/\overline{I(W)}$ where $I(W)$ is the ideal generated by the relations coming from the potential $W$. For more details, we refer to Section~\ref{section:jacobian:algebras}.
\end{example}

\subsection{Projective modules}
\label{section:projective:modules}
For each $i \in Q_0$ the $\Gam$-module $P_i = \Gam e_i$ is projective and has endomorphism ring $\End_\Gam(P_i) \cong (e_i\Gam e_i)^{\op}$. Note that $\rad(e_i\Gam e_i)= e_i \mathfrak{m}e_i$ is a maximal ideal, and thus $\End_\Gam(P_i)$ is local. 
It follows that $\Gam$ is a semiperfect algebra. In particular,
each finitely generated projective $\Gam$-module $P$ is isomorphic to $\bigoplus_{i \in Q_0} P_i^{a_i}$ for a unique tuple $(a_i)_{i \in Q_0} \in \mathbb{N}_0^{Q_0}$. Of course, analogous statements hold for $\Lam = \Gam/I$ as above.

\subsection{Nakayama Lemma} Having introduced the necessary notation, we next state the Nakayama Lemma for the convenience of the reader. As mentioned above, in the proof of our main result we are largely inspired by the ideas of \cite[Section 10]{DWZ10}. For our, possibly infinite-dimensional algebras, however, we replace the nilpotency of the Jacobson radical assumed in \cite[eq. 10.3]{DWZ10} by arguments based on a form of the (non-commutative) Nakayama Lemma. 

\begin{lemma}[Nakayama Lemma]
    \label{Nakayama:lemma}
    Let $B$ be a unital ring with Jacobson radical $\mathfrak{m}$, and let $N$ be a finitely generated $B$-module.
    \begin{enumerate}
        \item If $N'$ is a submodule of $\mathfrak{m}N$, then for any submodule $N''$ of $N$ the equality $N'+N''=N$ implies $N''=N$.
        \item If $f \colon M \rightarrow N$ is a module homomorphism such that the induced morphism $\overline{f} \colon M/\mathfrak{m}M \rightarrow N/\mathfrak{m}N$ is surjective, then $f$ was already surjective.
    \end{enumerate}
\end{lemma}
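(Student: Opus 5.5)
The plan is the standard argument. For (i), I would write $N = N' + N''$ with $N' \subseteq \mathfrak{m}N$, so that $N = \mathfrak{m}N + N''$. I then pass to the quotient $\bar N \coloneq N/N''$. This module is finitely generated, being a quotient of the finitely generated module $N$. The equality $N = \mathfrak{m}N + N''$ gives $\mathfrak{m}\bar N = \bar N$. So it suffices to prove the classical statement: a finitely generated module $X$ with $\mathfrak{m}X = X$ is zero.

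For the classical statement I argue by contradiction. Suppose $X \neq 0$ and choose a generating set $x_1,\dots,x_r$ with $r \geq 1$ minimal. Since $x_r \in X = \mathfrak{m}X$, we can write
\[ x_r = \sum_{j=1}^r m_j x_j, \qquad m_j \in \mathfrak{m}. \]
Rearranging gives
\[ (1-m_r)x_r = \sum_{j<r} m_j x_j. \]
The element $1-m_r$ is invertible, because $m_r$ lies in the Jacobson radical. This is the one place where the defining property of the Jacobson radical is used: for every $m \in \mathfrak{m}$, $1-m$ has a two-sided inverse. (It has a left inverse since $B(1-m)$ lies in no maximal left ideal; the left inverse $u$ satisfies $u = 1 + um$, so $u = 1 - (-um)$ also has a left inverse, and hence $u$ is a unit.)

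Multiplying by $(1-m_r)^{-1}$ shows that $x_r$ lies in the span of $x_1,\dots,x_{r-1}$. This contradicts the minimality of $r$, so $X = 0$. Hence $N'' = N$.

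For (ii), I apply (i) with $N'' \coloneq f(M)$ and $N' \coloneq \mathfrak{m}N$. Surjectivity of $\overline{f}$ means exactly that every element of $N$ is congruent to an element of $f(M)$ modulo $\mathfrak{m}N$. In other words, $f(M) + \mathfrak{m}N = N$. Since $N$ is finitely generated, (i) yields $f(M) = N$, so $f$ is surjective. Note that no finiteness assumption on $M$ is needed.

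The only step needing any care is the invertibility of $1-m$ for $m \in \mathfrak{m}$ in a noncommutative ring, and this is standard. In our applications $B = \Lam$ or $\Gam$, where this is transparent anyway, because $1-m$ has inverse $\sum_{p \geq 0} m^p$ in the completed setting.
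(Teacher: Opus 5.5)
Your proof is correct. The paper does not prove this lemma itself: it cites the standard Nakayama Lemma in Pierce (Section~4.2) for (i) and an exercise there for (ii). Your argument is exactly the standard proof behind those citations: you reduce (i) to ``$\mathfrak{m}X = X$ implies $X = 0$'' via the quotient $N/N''$, prove that by a minimal generating set and the invertibility of $1-m$ for $m \in \mathfrak{m}$, and then deduce (ii) from (i) with $N'' = f(M)$ and $N' = \mathfrak{m}N$.
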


Part (i) is for example a special case of Nakayama's Lemma for modules presented in 
\cite[Section~4.2]{PierceAssocAlg}. Part (ii) is the (easy) Excercise $2$ in \cite{PierceAssocAlg}. For a discussion of the history and attribution of names see for example \cite[Section~4.22]{LamNonComm}.

\section{Discrete and locally nilpotent modules}

In this section, let $\Lam = \Gam/\mathfrak{J}$ and
$\bLam = \Gam/\overline{\mathfrak{J}}$ as before.

\subsection{Characterization of discrete modules}
A module $M \in \Mod(\bLam)$ is \emph{discrete} if for each $v \in M$
the annihilator 
\[
{\rm Ann}_{\bLam}(v) \coloneq \{ a \in \bLam \mid av = 0 \}
\]
is open in $\bLam$.

A module $M \in \Mod(\Gam)$ is
\emph{nilpotent} if there exists some $l_M \ge 0$ such that $\mathfrak{m}^{l_M} M = 0$.
Furthermore, $M$ is \emph{locally nilpotent} if for each
$v \in M$ there exists some $l_v \ge 0$ such that $\mathfrak{m}^{l_v}v = 0$. Clearly, nilpotent modules are locally nilpotent. The converse is in general wrong. By \cite[\S10]{DWZ10}, every finite-dimensional module over a complete path algebra is nilpotent.

\begin{proposition}\label{prop:locnilpotent}
Let $\Gam = K\Gam = K\comp{Q}$, and let $\bLam = \Gam/\overline{\mathfrak{J}}$.
For $M \in \Mod(\Gam)$ the following are equivalent:
\begin{enumerate}

\item\label{item:M-is-loc-nil} $M$ is locally nilpotent;

\item\label{item:long-paths-kill-a-given-v} For each $v \in M$ there exists some $l_v \ge 0$ such that $p v = 0$ for all
paths of length $l_v$ in $Q$;

\item\label{item:every-cyclic-mod-is-f.d.} For each $v \in M$, the module $\Gam v$ is finite-dimensional;

\item\label{item:M-is-union-of-f.d.-mods} $M$ is the union of its finite-dimensional submodules.
    
\end{enumerate}
Furthermore, for $M \in \Mod(\bLam)$ the following are equivalent:
\begin{enumerate}

\item[(a)] $M$ is discrete;

\item[(b)] $M$ is locally nilpotent.
    
\end{enumerate}
\end{proposition}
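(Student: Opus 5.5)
We prove the first block of equivalences by the cycle (i)$\Rightarrow$(ii)$\Rightarrow$(iii)$\Rightarrow$(iv)$\Rightarrow$(i). Then we show (a)$\Leftrightarrow$(b) by comparing open ideals of $\bLam$ with the powers of its radical.

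\emph{(i)$\Rightarrow$(ii).} Every path of length $l_v$ lies in $\mathfrak{m}^{l_v}$, so this is immediate.

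\emph{(ii)$\Rightarrow$(iii).} Let $l=l_v$ be as in (ii). Since $Q$ is finite, $\mathfrak{m}^l$ is the ideal generated by the finitely many paths of length $l$. So every element of $\mathfrak{m}^l$ is a finite sum $\sum_p a_p p$ with $a_p\in\Gam$ and $p$ of length $l$. This is the point where completeness needs care: an element of $\prod_{m\ge l}KQ_m$ factors by splitting off the leading length-$l$ segment of each path. Hence $\mathfrak{m}^l v=0$. Then $\Gam v$ is a quotient of $\Gam/\mathfrak{m}^l$, which is spanned by the finitely many paths of length $<l$. So $\Gam v$ is finite-dimensional.

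\emph{(iii)$\Rightarrow$(iv).} Each $v$ lies in the finite-dimensional submodule $\Gam v$.

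\emph{(iv)$\Rightarrow$(i).} Let $v\in N$ with $N$ a finite-dimensional submodule. Such $N$ is nilpotent by the cited fact from \cite[\S10]{DWZ10}. Alternatively, one can argue directly: $\mathfrak{m}^jN$ is a descending chain of subspaces that must stabilise, say at $\mathfrak{m}^jN=\mathfrak{m}^{j+1}N$. The module $\mathfrak{m}^jN$ is finitely generated, so Lemma~\ref{Nakayama:lemma}(i) applied with $N''=0$ gives $\mathfrak{m}^jN=0$. Therefore $\mathfrak{m}^jv=0$.

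For the second part, write $\bar{\mathfrak{m}}$ for the image of $\mathfrak{m}$ in $\bLam$. The topology of $\bLam$ is the quotient of the $\mathfrak{m}$-adic topology, so the ideals $\bar{\mathfrak{m}}^l$ form a basis of neighbourhoods of $0$ consisting of open left ideals.

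\emph{(b)$\Rightarrow$(a).} Suppose $\bar{\mathfrak{m}}^{l_v}v=0$. Then ${\rm Ann}(v)$ contains the open subgroup $\bar{\mathfrak{m}}^{l_v}$, so it is open.

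\emph{(a)$\Rightarrow$(b).} Suppose ${\rm Ann}(v)$ is open. Then it contains some basic neighbourhood $\bar{\mathfrak{m}}^{l}$, hence $\bar{\mathfrak{m}}^lv=0$. Viewing $M$ as a $\Gam$-module via the quotient map, this gives $\mathfrak{m}^lv=0$.

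The only point that needs genuine care is the description of the topology on $\bLam$. We must check that the quotient topology has the images of $\mathfrak{m}^l$ as a neighbourhood basis, and that "open" in the definition of discrete refers to this topology. We take this from Gabriel's framework. The closure $\overline{\mathfrak{J}}$ is closed, so $\bLam$ is Hausdorff, but this plays no further role in the argument.
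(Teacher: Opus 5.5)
Your proof is correct and follows the paper's argument. It uses the same chain of implications, with (ii)$\Rightarrow$(iii) resting on writing every element of $\mathfrak{m}^l$ as a finite sum $\sum_p a_p p$ over the paths of length $l$, and (a)$\Leftrightarrow$(b) resting on the observation that, in the quotient topology, an annihilator is open exactly when it contains the image of some $\mathfrak{m}^l$. The differences are minor: you give an explicit Nakayama argument for (iv)$\Rightarrow$(i), which the paper calls clear, and you pass directly from open annihilators to $\mathfrak{m}^l v=0$ rather than detouring through finite-dimensionality of $\Gam v$.
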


\begin{proof}
For the first set of conditions, the implications \ref{item:M-is-loc-nil} $\Longrightarrow$ \ref{item:long-paths-kill-a-given-v}, \ref{item:every-cyclic-mod-is-f.d.} $\Longrightarrow$ \ref{item:M-is-union-of-f.d.-mods} and \ref{item:M-is-union-of-f.d.-mods} $\Longrightarrow$ \ref{item:M-is-loc-nil} are clear.

The implication \ref{item:long-paths-kill-a-given-v} $\Longrightarrow$ \ref{item:every-cyclic-mod-is-f.d.} follows from the fact that every element of $\mathfrak{m}^l$ can be written as a finite sum $\sum_{p\in \Pi_l}u_pp$ with $u_p\in \Gam$, which is an easy consequence of the fact that for any given $l>0$ the set $\Pi_l$ of paths on $Q$ of length exactly $l$ is finite (because $Q$ has only finitely many arrows).

To prove that (a) $\Longleftrightarrow$ (b), take a $\bLam$-module $M$ and notice that, since $\bLam$ has been given the quotient topology under the projection $\Gam\rightarrow\bLam$, for any given $m \in M$ 
the annihilator 
${\rm Ann}_{\bLam}(m) $
is open in $\bLam$ if and only if its preimage under said projection contains a positive power $\mathfrak{m}^l$, if and only if the $\Gam$-module homomorphism $\Gam\rightarrow M$ given by $u\mapsto um$ has finite-dimensional image. The equivalence between (a) and (b) then follows from the equivalence of the first set of statements.
\end{proof}



\subsection{Injective versus discrete-injective modules}\label{subsec:injectives}
The injective objects in the category ${\rm Dis}(\bLam)$ are
called \emph{discrete-injective}. Here we give an explicit description of the indecomposable discrete-injective $\bLam$-modules. 
%
%
As before,
$\Lambda_l\coloneq\Gamma/(\mathfrak{J}+\mathfrak{m}^l)$ is the $l^{\operatorname{th}}$ truncation of $\bLam$. 

For each vertex $i\in Q_0$, set
\[
D_2(e_i\bLam) \coloneq \{\varphi\in \hom_{K}(e_i \bLam,K)\mid \varphi \ \text{is continuous}\},
\]
where $\bLam$ is given the quotient topology under the projection $\pi:\Gamma\rightarrow \bLam$ and $K$ is given the discrete topology. Notice that $\varphi\in \hom_{K}(e_i \bLam,K)$ is continuous if and only if $\ker\varphi$ is an open right $\bLam$-submodule of $e_i\bLam$, if and only if $\pi^{-1}(\ker\varphi)$ contains $e_i(\mathfrak{m}^l)$ for $l\gg0$, cf. \cite[\S2]{SimsonDaniel2001Ccpa}. 

Now, for each positive integer $l$, let 
\[
I_{i,l} \coloneq \{\varphi\in\hom_{K}(e_i\Gamma,K) \mid \varphi(e_i(\mathfrak{J}+\mathfrak{m}^l))=0\}.
\]
Then $I_{i,l}$ is an indecomposable injective $\Lambda_l$-module and there is an inclusion of vector spaces 
$I_{i,l} \hookrightarrow I_{i,l+1}$ that is actually an injective envelope in $\md(\Lambda_{l+1})$. Furthermore, $S_i \coloneq I_{i,l}$ is the one-dimensional simple $\bLam$-module supported at $i$ and annihilated by $\mathfrak{m}$. 
Let also $S_i \hookrightarrow I_{i,\bLam}$ be an injective envelope of $S_i$ in $\Mod(\bLam)$. Then we can recursively define $\bLam$-module homomorphisms $\iota_l: I_{i,l}\rightarrow I_{i,\bLam}$ so as to obtain
a commutative diagram 
$$
\begin{tikzcd}        
	0 \arrow{r} &  S_i \arrow[hook]{r} \arrow[hook]{drrr} & I_{i,2}\arrow[hook]{r} \arrow[hook]{drr}{\iota_2} & \cdots \arrow[hook]{r} & I_{i,l} \arrow[hook]{r} \arrow[hook]{d}{\iota_l} & I_{i,l+1} \arrow[hook]{r} \arrow[hook]{dl}{\iota_{l+1}} & \cdots\\
	& & & & I_{i,\bLam} & 
\end{tikzcd}
$$
all of whose horizontal arrows are inclusions and whose non-horizontal arrows are injective envelopes in $\Mod(\bLam)$. The direct limit in $\Mod(\bLam)$ of the horizontal inclusions is
\begin{align*}
\varinjlim_l I_{i,l} & =\bigcup_{l\geq 1} I_{i,l} \\
&= \{\varphi\in \hom_{K}(e_i \Gamma,K)\mid \exists l\gg0 \ \text{such that} \ \varphi(e_i(\mathfrak{J}+\mathfrak{m}^l)) = 0\} \\
&= D_2(e_i \Lambda).
\end{align*}
Notice that $\bigcup_{l\geq 1} I_{i,l}$ is locally nilpotent.

The non-horizontal maps $\iota_l$ give rise to an injective envelope in $\Mod(\bLam)$
$$\iota:\varinjlim_l I_{i,l}\hookrightarrow I_{i,\bLam},$$ 
which means that $D_2(e_i\Lambda)$ is contained in the locally nilpotent part of $I_{i,\bLam}$. In order to deduce that $D_2(e_i\Lambda)$ is the whole locally nilpotent part of $I_{i,\bLam}$, we need the following lemma.

%

\begin{lemma}\label{lemma:how-small-inj-sits-inside-bigger-inj}
	Take $l\geq 1$ and an injective left $\Lambda_l$-module $I$. If $\iota:I\hookrightarrow E$ is an injective envelope of $I$ in $\Mod(\bLam)$, then the image of $\iota$ is
	$$
	\iota(I)=\{m\in E\mid \mathfrak{m}^{l}m=0\}.
	$$
\end{lemma}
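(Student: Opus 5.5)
Write $E_l \coloneq \{m\in E\mid \mathfrak{m}^{l}m=0\}$. The plan is to prove the two inclusions $\iota(I)\subseteq E_l$ and $E_l\subseteq \iota(I)$ separately. The first is the easy one, and the second uses that $I$ is injective over $\Lambda_l$ together with essentiality of $\iota$.

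\emph{Step 1: $\iota(I)\subseteq E_l$.} The module $I$ is a $\Lambda_l$-module, so it is annihilated by $\mathfrak{J}+\mathfrak{m}^l$. It is a $\bLam$-module via the surjection $\bLam\to\Lambda_l$, which exists because $\overline{\mathfrak{J}}\subseteq \mathfrak{J}+\mathfrak{m}^l$. Since $\iota$ is $\bLam$-linear, $\mathfrak{m}^l\iota(I)=\iota(\mathfrak{m}^lI)=0$.

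\emph{Step 2: $E_l$ is a $\Lambda_l$-submodule of $E$.} The set $E_l$ is a $\bLam$-submodule because $\mathfrak{m}^l$ is a two-sided ideal: for $a\in\bLam$ we have $\mathfrak{m}^l a\subseteq \mathfrak{m}^l$. It is killed by the image of $\mathfrak{m}^l$ in $\bLam$. It is also killed by $\mathfrak{J}$, since $\mathfrak{J}$ already acts as zero on every $\bLam$-module. Hence $E_l$ is naturally a $\Lambda_l$-module, and by Step 1 it contains $\iota(I)\cong I$ as a $\Lambda_l$-submodule.

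\emph{Step 3: $E_l\subseteq \iota(I)$.} Since $I$ is injective in $\Mod(\Lambda_l)$, the inclusion $\iota(I)\hookrightarrow E_l$ of $\Lambda_l$-modules splits, giving $E_l=\iota(I)\oplus C$ for some $\Lambda_l$-submodule $C$. Every $\Lambda_l$-submodule of $E_l$ is a $\bLam$-submodule of $E$, since $\bLam$ acts through $\Lambda_l$. So $C$ is a $\bLam$-submodule of $E$ with $C\cap\iota(I)=0$. Because $\iota$ is an injective envelope, $\iota(I)$ is essential in $E$, and therefore $C=0$. This gives $E_l=\iota(I)$.

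The only point requiring care is Step 2, namely checking that the $\Lambda_l$-structure on $E_l$ is well defined and compatible with the $\bLam$-structure. This reduces to the inclusion $\overline{\mathfrak{J}}+\mathfrak{m}^l=\mathfrak{J}+\mathfrak{m}^l$, which holds because $\overline{\mathfrak{J}}\subseteq \mathfrak{J}_l$ by definition of the closure. Beyond this, the argument is formal, and I expect no real obstacle.
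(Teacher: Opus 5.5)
Your proof is correct and follows the same route as the paper's. Both arguments view $E_{(l)}=\{m\in E\mid \mathfrak{m}^l m=0\}$ as a $\Lambda_l$-module containing $\iota(I)$, split off $\iota(I)$ as a direct summand using $\Lambda_l$-injectivity, and then show the complement vanishes. The only difference is this last step: you use essentiality of $\iota(I)$ in $E$ directly, while the paper shows $\operatorname{soc}(\iota(I))=\operatorname{soc}(E_{(l)})$ and appeals to a result from Anderson--Fuller.
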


\begin{proof}
	Write $E_{(l)} \coloneq \{m\in E\mid \mathfrak{m}^{l}m=0\}$. It is clear that $E_{(l)}$ is a $\Lambda_l$-module and $\iota(I)$ is a $\Lambda_l$-submodule of $E_{(l)}$.  Now, $E_{(l)}$ is a $\bLam$-submodule of $E$, hence 
	$$\operatorname{soc}_{\Lambda_{l}}(\iota(I))\subseteq\operatorname{soc}_{\Lambda_{l}}(E_{(l)})=\operatorname{soc}_{\bLam}(E_{(l)})\subseteq \operatorname{soc}_{\bLam}(E)=\operatorname{soc}_{\bLam}(\iota(I))=\operatorname{soc}_{\Lambda_{l}}(\iota(I))$$ 
	and, consequently, $\operatorname{soc}_{\Lambda_{l}}(\iota(I))=\operatorname{soc}_{\Lambda_{l}}(E_{(l)})$.
	
	Since $I$ is injective in $\md(\Lambda_l)$, and $\iota(I)$ is a $\Lambda_l$-submodule of $E_{(l)}$, $\iota(I)$ is an internal direct summand of $E_{(l)}$ in $\md(\Lambda_l)$. Since $\operatorname{soc}_{\Lambda_{l}}(I)=\operatorname{soc}_{\Lambda_{l}}(E_{(l)})$, by \cite[Proposition 9.19]{AF92}
    we must have $\iota(I)=E_{(l)}$.
\end{proof}

By Lemma \ref{lemma:how-small-inj-sits-inside-bigger-inj},
\begin{equation}\label{eq:truncated-indec-injective-inside-big-injective}
\bigcup_{l\geq 1} I_{i,l} = \{m\in I_{i,\bLam}\mid \exists l>0 \ \text{such that} \ \mathfrak{m}^lm=0\},
\end{equation}
and from this we deduce that $I_{i,{\rm dis}} \coloneq D_2(e_i \Lambda)$ is an injective object of the category of locally nilpotent $\bLam$-modules, which is precisely ${\rm Dis}(\bLam)$ by Proposition \ref{prop:locnilpotent}.  
The proof of the following proposition is left in the reader's hands.



%


\begin{proposition}
The discrete-injective $\bLam$-module $I_{i,{\rm dis}}$ is indecomposable. Furthermore, every indecomposable discrete-injective $\bLam$-module is isomorphic to $I_{i,{\rm dis}}$ for a unique vertex $i\in Q_0$.
\end{proposition}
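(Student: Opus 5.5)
The plan is to show that $I_{i,{\rm dis}}$ is the injective envelope of the simple $S_i$ inside the category ${\rm Dis}(\bLam)$. Everything then follows from the general theory of injective envelopes in a locally finite Grothendieck category, or can be argued directly.

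\emph{Indecomposability.} By (\ref{eq:truncated-indec-injective-inside-big-injective}), $I_{i,{\rm dis}}=\bigcup_l I_{i,l}$ sits inside $I_{i,\bLam}$, which is an injective envelope of $S_i$ in $\Mod(\bLam)$. Hence $S_i\subseteq I_{i,{\rm dis}}$ is an essential extension. Concretely, every nonzero submodule $U$ meets $S_i$: pick $0\neq u\in U$; then $\Gam u$ is finite-dimensional and nilpotent by Proposition~\ref{prop:locnilpotent}, so it contains a simple submodule, and this lies in $\operatorname{soc}(I_{i,\bLam})=S_i$. Thus $\operatorname{soc}(I_{i,{\rm dis}})=S_i$ is simple. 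A decomposition $I_{i,{\rm dis}}=U_1\oplus U_2$ into nonzero summands would force both summands to meet $S_i$, which is impossible.

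\emph{Classification.} Let $E$ be an indecomposable discrete-injective module. Since $E$ is locally nilpotent, any $0\neq v\in E$ generates a finite-dimensional nilpotent module $\Gam v$, and this contains some simple $S_i$. So $\operatorname{soc}(E)\neq 0$, and we may fix $S_i\hookrightarrow E$. We already know $I_{i,{\rm dis}}$ is injective in ${\rm Dis}(\bLam)$, so the inclusion extends to a map $f\colon E\to I_{i,{\rm dis}}$. Conversely, $E$ is injective in ${\rm Dis}(\bLam)$ and $I_{i,{\rm dis}}$ is discrete, so we also get $g\colon I_{i,{\rm dis}}\to E$ extending $S_i\hookrightarrow E$.

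Now $g$ is injective: $\ker g\cap S_i=0$ and $S_i$ is essential in $I_{i,{\rm dis}}$. So $g(I_{i,{\rm dis}})$ is an injective subobject of $E$ in ${\rm Dis}(\bLam)$. Since ${\rm Dis}(\bLam)$ is a full subcategory closed under submodules and quotients, a monomorphism out of an injective object splits there. Indecomposability of $E$ then gives $E\cong I_{i,{\rm dis}}$.

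\emph{Uniqueness of $i$.} The socle determines $i$: $\operatorname{soc}(I_{i,{\rm dis}})\cong S_i$, and the $S_i$ are pairwise non-isomorphic.

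\emph{Main obstacle.} The delicate point is the essentiality $S_i\subseteq I_{i,{\rm dis}}$, i.e.\ that the socle is exactly $S_i$ and every nonzero element generates something meeting it. I would verify this directly from the description $I_{i,{\rm dis}}\subseteq\hom_K(e_i\Gam,K)$, as follows.
\begin{enumerate}
\item Given $\varphi\neq 0$ vanishing on $e_i\mathfrak{m}^l$, choose a path $p$ of maximal length with $\varphi(p\,\cdot)\neq0$ on $e_i\Gam$. Equivalently, use the action $(a\varphi)(x)=\varphi(xa)$.
\item Then $p\varphi$ is annihilated by all arrows and is therefore a nonzero multiple of the functional $e_i\mapsto 1$ spanning $S_i$.
\end{enumerate}
This avoids any subtleties about envelopes in $\Mod(\bLam)$.
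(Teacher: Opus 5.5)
Your argument is correct. The paper leaves this proof to the reader, and your argument is essentially the intended one: the set-up of Section~\ref{subsec:injectives} exhibits $I_{i,{\rm dis}}$ as the locally nilpotent part of the injective envelope $I_{i,\bLam}$ of $S_i$, hence injective in ${\rm Dis}(\bLam)$, and this is exactly what your essentiality-plus-splitting argument needs; your maximal-path computation of the socle is a valid self-contained substitute for the detour through $I_{i,\bLam}$. Only cosmetic fixes are needed: in step (1) you mean $p\varphi=\varphi(\,\cdot\,p)\neq 0$ rather than $\varphi(p\,\cdot)$, the map $f$ is never used, and you should note that a simple submodule of a locally nilpotent module is killed by $\mathfrak{m}$ and is therefore isomorphic to some $S_j$.
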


The next example shows that in general $I_{i,{\rm dis}}$ is not injective in $\Mod(\bLam)$. 

\begin{example}\label{ex:3.4}
Let $\bLam = \Gam = K\comp{Q}$ where $Q$ is the quiver
\[
\xymatrix{
1 & \ar[l]_\alpha 2 \ar@(ur,dr)[]^\beta
}.
\]
For $m \ge 0$ define $p_m \coloneq \alpha \beta^m$.
We obviously have $p_m \in \mathfrak{m}^{m+1}$.
The cyclic $\Gam$-module $\Gam p_m$ is then obviously
isomorphic to the simple $\Gam$-module $S_1$.
Next, one easily observes that
\[
U \coloneq \sum_{m \ge 0} \Gam p_m = \bigoplus_{m \ge 0} \Gam p_m,
\]
i.e. the sum of the submodules $\Gam p_m$ of $\Gam$ is a direct sum.
Let $\iota_U\colon U \to \Gam$ be the inclusion map, and let
$f \coloneq (f_m)_m\colon U \to S_1$, where
$f_m\colon \Gam p_m \to S_1$ is defined by $ap_m \mapsto ae_1$
with $a \in \Gam$.
Next, let $\iota_{S_1}\colon S_1 \to I_{1,\Gam}$ be a monomorphism.
By the injectivity of $I_{1,\Gam}$ there is a homomorphism $g\colon \Gam \to I_{1,\Gam}$
such that the diagram
\[
\xymatrix{
	U \ar[d]_{\iota_{S_1} \circ f}\ar[r]^{\iota_U} & \Gam \ar@{-->}[dl]^g
	\\
	I_{1,\Gam}
}
\]
commutes.
Let $v \coloneq g(1)$.
We have $g(a) = av$ for all $a \in \Gam$.
Observe that $0 \not= s \coloneq \iota_{S_1}(f(p_m)) = \iota_{S_1}(e_1)$ does not depend on $m$. 
We get 
\[
s = \iota_{S_1}(f(p_m)) = g(\iota_U(p_m)) = g(p_m) = p_mv. 
\]
It follows that $\mathfrak{m}^{m+1} v \not= 0$ for all $m \ge 0$.
In other words, $I_{1,\Gam}$ is not locally nilpotent.
Now Proposition~\ref{prop:locnilpotent} implies that $I_{1,\Gam}$ is not discrete. Hence $I_{1,\Gam}\neq I_{1,{\rm dis}}$.
\end{example}

In \cite[the paragraph before Lemma 2.2]{CLS15}, it is claimed that $I_i=I_{i,\operatorname{dis}}$, a fact that is in general false, see Example \ref{ex:3.4} above. 
As this was a peripheral claim, all other results in 
\cite{CLS15} are not affected by this mistake.

\section{Standard presentations} 

The first step to our main result is to derive from \cite{BK99} a standard projective presentation of finite-dimensional modules over $\Lam$, which will be achieved in this section. This construction then enables us to construct minimal projective presentations for all finite-dimensional modules over $\Lam$ in the next section.

\subsection{Completed enveloping algebra} 
\label{section:completed:enveloping:algebra}
To obtain a standard projective resolution for the algebra $\Lam$, we need to consider the \emph{completed enveloping algebra}. This is defined as 
\begin{equation*}
    \hat{\Gam}^e \coloneq \Gam \hat{\otimes}_K \Gam^{\text{op}} = \prod_{i,j \geq 0} \left(A^{\otimes_S^i}\otimes_K 
    (A^{\otimes_S^j})^{\text{op}}\right)
\end{equation*}
where $\Gam^{\text{op}}$ denotes the opposite algebra. This is again a basic pseudocompact algebra with primitive idempotents given by $e_i \otimes e_j$ for $i,j \in Q_0$. Equivalently, we can think of $\hat{\Gam}^e$ as a completed path algebra of a quiver. For this consider the quiver $Q^e$ with
\begin{equation*}
    Q_0^e = Q_0 \times Q_0 \text{ and } Q_1^e = (Q_1 \times Q_0) \cup (Q_0 \times Q_1),
\end{equation*}
and start and terminal vertices for $\alpha,\beta \in Q_1$ and $i \in Q_0$ as follows:
\begin{align*}
    s(\alpha,i) &= (s(\alpha), i), \quad t(\alpha,i)=(t(\alpha),i), \\
    s(i,\beta) &= (i,t(\beta)), \quad t(i,\beta)=(i,s(\beta)).
\end{align*}
Then we may think of $\hat{\Gam}^e$ as the completed path algebra of $Q^e$ modulo the ideal which is generated by the relations
\begin{equation*}
    (\beta,s(\alpha))(s(\beta),\alpha) - (t(\beta),\alpha)(\beta,t(\alpha)) \text{ for all } (\alpha,\beta) \in Q_1 \times Q_1.
\end{equation*} 
Note that this ideal is closed. In particular, the indecomposable projective $\hat{\Gam}^e$-modules are precisely of the form $P_{i,j}^e \coloneq \hat{\Gamma}^e(e_i \otimes e_j)$ for $(i,j) \in Q_0 \times Q_0$.

Moreover, $\hat{\Gam}^e = \Gam \hat{\otimes}_K \Gam^{\text{op}}$ acts naturally on $\Gam$ by $(p \otimes r) \cdot q \coloneq pqr$ with $p,q,r \in \Gam$. This turns $\Gam$ into a pseudocompact, (usually non-projective) $\hat{\Gam}^e$-module. It is convenient to observe that by acting with $\hat{\Gam}^e$ in the same way, we can identify $P_{i,j}^e \cong \Gam e_i \hat{\otimes} e_j \Gam$ as left $\hat{\Gam}^e$-modules. This observation allows us to identify $\Gam \hat{\otimes}_S \Gam$ with the projective $\hat{\Gam}^e$-module $\bigoplus_{i \in Q_0}P_{i,i}^e$. Similarly, we identify
\begin{equation*}
    (\Gam \otimes_S A) \hat{\otimes}_S \Gam = \Gam \hat{\otimes}_S (A \otimes_S \Gam) 
\end{equation*}
with
\begin{equation*}
    \bigoplus_{\alpha \in Q_1} P_{\alpha}^e \text{ where } P_{\alpha}^e \cong P_{t(\alpha),s(\alpha)}^e.
\end{equation*}
Indeed, by definition 
\[
A \coloneq \bigoplus_{\alpha \in Q_1}K\alpha\] and 
$e_{t(\alpha)} \alpha e_{s(\alpha)} = \alpha$ and so 
\[
\Gam \hat{\otimes}_S A \hat{\otimes}_S \Gam \cong \bigoplus_{\alpha \in Q_1} 
\Gam e_{t(\alpha)} \hat{\otimes} e_{s(\alpha)} \Gam.
\]
We write $\Gam \hat{\otimes}_S A \hat{\otimes}_S \Gam$ by slight abuse of notation for this projective $\hat{\Gam}^e$-module.

\subsection{Standard presentations}
Let us construct a standard projective presentation of finite-dimensional representations of $\Lam$. It seems to be well-known to specialists \cite[eq. 1.3]{BK99} that we have a standard projective $\hat{\Gam}^e$-resolution
\begin{equation}
    \label{eq:stand:proj:gamma:e:resolution}
    0 \rightarrow \Gam \hat{\otimes}_S A \hat{\otimes}_S \Gam \xrightarrow{d} \Gam \hat{\otimes}_S \Gamma \xrightarrow{e} \Gam \rightarrow 0,
\end{equation}
where the maps are defined as follows:
\begin{align*}
    d(p \otimes_S \alpha \otimes_S q) &= p\alpha \otimes_S q - p \otimes_S \alpha q \text{ for } p,q \in \Gam, \alpha \in A \text{ and}\\
    e(p \otimes_S q) &= pq \text{ for } p,q \in \Gam
\end{align*}
is the muliplication map. We claim that both $e$ and $d$ are continuous. Indeed, it suffices to show that the corresponding maps $e_0$ and $d_0$ for the non-completed tensor algebra $\Gam^0=T_S(A)$, that is, $KQ$ are continuous. The claim then follows by completion. The classical result for $\Gam^0$ follows by considering $\Gam^0$ naturally as a graded algebra. The corresponding maps $d_0$ and $e_0$ are then homogeneous of degree $0$, i.e. uniformly continuous with respect to the $\mathfrak{m}$-adic topology. This proves the claim.

To derive a projective presentation not only for $\Gam$ but also for $\Lam$ we follow \cite{DWZ10} in defining \emph{derivations}. As in \cite[eq. (8.6)]{DWZ10}, we have for each $\alpha \in Q_1$ a continuous linear map called \emph{derivation}
\begin{equation*}
    \Delta_{\alpha}\colon \Gam \rightarrow \Gam e_{t(\alpha)} \,\hat{\otimes}_S \,K\alpha \,\hat{\otimes}_S \,e_{s(\alpha)}\Gam,
\end{equation*}
which is defined on paths as 
\begin{equation*}
    \alpha_l\alpha_{l-1}\cdots \alpha_1 \mapsto \sum_{i: \alpha_i=\alpha} \alpha_l \cdots \alpha_{i+1} \otimes_S \alpha \otimes_S \alpha_{i-1} \cdots \alpha_1.
\end{equation*}
Later on, it will be convenient to have for each $\rho \in \Gam$ the notation
\begin{equation*}
    \Delta_\alpha(\rho) = \sum_{i} \Delta_{\alpha}^{(1,i)}(\rho) \otimes_S \alpha \otimes_S \Delta_{\alpha}^{(2,i)}(\rho).
\end{equation*}
In order for our result to work in arbitrary characteristic, we need to replace the cyclic derivatives used in  \cite{DWZ10} as follows. We define the continuous linear maps
\begin{align*}
    l_{\alpha} &\colon \Gam \rightarrow e_ {s(\alpha)}\Gam, \;\alpha_l\alpha_{l-1} \cdots \alpha_1 \mapsto \begin{cases}
        \alpha_{l-1}\cdots \alpha_2\alpha_1 &\text{if }  \alpha_l=\alpha, \\
        0 &\text{else},
    \end{cases}
    \; \text{ and} \\
    r_{\alpha} &\colon \Gam \rightarrow \Gam  e_{t(\alpha)}, \;\alpha_l\alpha_{l-1} \cdots \alpha_1 \mapsto \begin{cases}
        \alpha_l\alpha_{l-1}\cdots \alpha_2 &\text{if }  \alpha_1=\alpha, \\
        0 &\text{else}.
    \end{cases}
\end{align*}
We agree that $l_{\alpha}(\alpha) = e_{s(\alpha)}$, $r_{\alpha}(\alpha)=e_{t(\alpha)}$ and $l_\alpha(e_i)=0=r_{\alpha}(e_i)$ for all $i \in Q_0$. With this notation we have for each $\alpha \in Q_1$ and $\rho \in e_{t(\alpha)}\Gam$ obviously
\begin{align}
    \label{eq:delta:alpha:rho}
    \Delta_{\alpha}(\rho) - e_{t(\alpha)} \otimes_S \alpha \otimes_S l_{\alpha}(\rho) \in \mathfrak{m} \,\hat{\otimes}_S \,\alpha \, \hat{\otimes}_S \,e_{s(\alpha)}\Gam, \\
    \Delta_{\alpha}(\rho) - r_{\alpha}(\rho) \otimes_S \alpha \otimes_S e_{s(\alpha)} \in \Gam e_{t(\alpha)} \,\hat{\otimes}_S \,\alpha \, \hat{\otimes}_S \,\mathfrak{m}.
\end{align}

\begin{example}
    \label{example:cyc:differential}
    Consider the quiver $Q$ given by 
    \begin{equation*}
        \begin{tikzcd}[sep=scriptsize]
	        1 & 2
	        \arrow["\alpha", bend left, from=1-1, to=1-2]
	        \arrow["\beta", bend left, from=1-2, to=1-1]
    \end{tikzcd}.
    \end{equation*}
    Consider the path $\rho = (\beta\alpha)^n \in e_1 \Gam e_1$. We obtain
    \begin{equation*}
        \Delta_\alpha((\beta\alpha)^n) = \sum_{i=0}^{n-1} (\beta\alpha)^i\beta \otimes_S \alpha \otimes_S (\beta\alpha)^{n-i-1} \in \Gam e_1 \, \hat{\otimes}_S \, K\alpha \,\hat{\otimes}_S \,e_1\Gam.
    \end{equation*}
    Moreover, by definition,
    \begin{equation*}
    l_\alpha((\beta\alpha)^n)=0, \qquad r_\alpha((\beta\alpha)^n)=(\beta\alpha)^{n-1}\beta.
    \end{equation*}
    This example shows how we replace cyclic derivatives with a notion that still works in arbitrary characteristic.
\end{example}

\begin{lemma}
With the notation from Section~\ref{section:completed:path:algebras} we have an exact sequence of $\hat{\Gam}^e$-modules
\begin{equation}
    \label{lemma:Lamda:sequence}
    0 \rightarrow \mathfrak{J}/\mathfrak{J}^2 \xrightarrow{\overline{c}} \Lam \hat{\otimes}_S A \hat{\otimes}_S \Lam \xrightarrow{\overline{d}} \Lam \hat{\otimes}_S \Lam \xrightarrow{\overline{e}} \Lam\rightarrow 0,
\end{equation}   
where the maps $\overline{d}$ and $\overline{e}$ are induced by the corresponding maps in (\ref{eq:stand:proj:gamma:e:resolution}), and $\overline{c}$ is induced by the \emph{universal derivation}
\begin{equation*}
    c = (\Delta_\alpha)_{\alpha \in Q_1} \colon \Gam \rightarrow \Gam \hat{\otimes}_S A \hat{\otimes}_S \Gam.
\end{equation*}
\end{lemma}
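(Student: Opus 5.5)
The plan is to obtain the sequence by applying $\Lam\hat{\otimes}_\Gam(-)\hat{\otimes}_\Gam\Lam$ to the standard resolution (\ref{eq:stand:proj:gamma:e:resolution}) and computing the resulting homology. This is the usual argument for the beginning of a bimodule resolution of a quotient algebra, as in \cite{BK99}. Since (\ref{eq:stand:proj:gamma:e:resolution}) is a projective resolution of $\Gam$ as a $\hat{\Gam}^e$-module, it splits as a sequence of right $\Gam$-modules: $\Gam$ is free over itself, and $\Gam\hat{\otimes}_S\Gam \to \Gam$ has the right-linear section $q \mapsto 1\otimes_S q$.

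\textbf{Step 1.} Apply $\Lam\hat{\otimes}_\Gam -$ to the right-split sequence. It stays exact and gives
\[0\to \Lam\hat{\otimes}_S A\hat{\otimes}_S\Gam \to \Lam\hat{\otimes}_S\Gam\to \Lam\to 0.\]
This is a projective resolution of the right $\Gam$-module $\Lam$.

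\textbf{Step 2.} Apply $-\hat{\otimes}_\Gam\Lam$ to the Step 1 resolution, i.e.\ reduce modulo the right action of $\mathfrak J$. The homology is $\mathrm{Tor}^\Gam_\ast(\Lam,\Lam)$. Exactness at $\Lam\hat{\otimes}_S\Lam$ and surjectivity of $\overline{e}$ are immediate, because $\mathrm{Tor}_0 = \Lam\otimes_\Gam\Lam=\Lam$. The kernel of $\overline{d}$ is
\[\mathrm{Tor}_1^\Gam(\Lam,\Lam)=\mathrm{Tor}_1^\Gam(\Gam/\mathfrak J,\Gam/\mathfrak J)\cong (\mathfrak J\cap\mathfrak J)/\mathfrak J\mathfrak J=\mathfrak J/\mathfrak J^2.\]
This follows from the long exact sequence for $0\to\mathfrak J\to\Gam\to\Lam\to0$, which gives $\mathrm{Tor}_1(\Lam,\Gam/\mathfrak J)\cong\ker(\mathfrak J\otimes_\Gam\Lam\to\Lam)$ and hence $\mathfrak J/\mathfrak J^2$. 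Injectivity on the left holds because the resolution has length one, so $\mathrm{Tor}_2=0$.

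\textbf{Step 3.} Identify the connecting isomorphism with $\overline{c}$. For $\rho\in\mathfrak J$, lift $\rho$ to $\rho\otimes_S 1-1\otimes_S\rho \in \Gam\hat{\otimes}_S\Gam$, which lies in $\ker e$. Check on paths the key identity
\[d(c(\rho))=\rho\otimes_S1-1\otimes_S\rho.\]
It holds because $\sum_i(\alpha_l\cdots\alpha_i\otimes\alpha_{i-1}\cdots-\alpha_l\cdots\alpha_{i+1}\otimes\alpha_i\cdots)$ telescopes, and it extends to all of $\Gam$ by continuity. Since $\rho$ maps to zero in $\Lam$, the element $\overline{c}(\rho)$ lies in $\ker\overline d$. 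Next use the derivation property $c(a\rho b)=a\,c(\rho)\,b+(\text{terms involving } \Delta(a),\Delta(b) \text{ multiplied by }\rho)$, which vanish after reducing to $\Lam\hat{\otimes}A\hat{\otimes}\Lam$. From this, $\overline c$ is well defined and $\hat\Gam^e$-linear on $\mathfrak J$. It kills $\mathfrak J^2$, since $c(\rho\sigma)=\rho c(\sigma)+c(\rho)\sigma$ vanishes mod $\mathfrak J$ on both sides. Finally, confirm that $\overline c$ realizes the snake connecting map, so that it is the isomorphism onto $\ker\overline d$.

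\textbf{Main obstacle.} The difficult part is the topological bookkeeping: justifying that the completed tensor products $\Lam\hat{\otimes}_\Gam(-)\hat{\otimes}_\Gam\Lam$ behave like ordinary ones. This means checking that $(\Gam\hat{\otimes}_S X\hat{\otimes}_S\Gam)\otimes_\Gam\Lam\cong \Gam\hat{\otimes}_SX\hat{\otimes}_S\Lam$, even though $\mathfrak J$ need not be closed. I would handle this by noting that $\Gam e_i\hat{\otimes}e_j\Gam$ has finite rank as a module over $\Gam$ on each side for each fixed degree on the other side. The cleaner alternative is to define $\Lam\hat{\otimes}_SA\hat{\otimes}_S\Lam$ directly as this quotient. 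Then the Tor computation is purely algebraic, using only the right-splitting from Step 1.
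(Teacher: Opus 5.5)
Your proposal is correct and is essentially the paper's argument written out. The paper only observes that (\ref{eq:stand:proj:gamma:e:resolution}) splits as a sequence of left and of right $\Gam$-modules and then invokes \cite[Proposition~3.2]{BK99} with $\Omega_1=\Omega_2=\Gam$ and $H_1=H_2=\mathfrak{J}$; that proposition packages precisely your computation $\mathrm{Tor}_1^{\Gam}(\Lam,\Lam)\cong\mathfrak{J}/\mathfrak{J}^2$. Your Step~3, via $d(c(\rho))=\rho\otimes_S 1-1\otimes_S\rho$, also makes explicit the identification of this isomorphism with $\overline{c}$ (up to sign), which the paper leaves implicit.

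Two small corrections. First, exactness after applying $\Lam\hat{\otimes}_\Gam -$ in Step~1 comes from the \emph{left} splitting $p\mapsto p\otimes_S 1$, since that functor sees only the left $\Gam$-structure; the right splitting you cite does not give it. Second, injectivity of $\overline{c}$ follows from the connecting-map identification in Step~3, not from $\mathrm{Tor}_2=0$.
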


\begin{proof}
    Note that the exact sequence in (\ref{eq:stand:proj:gamma:e:resolution}) splits as a sequence of $\Gam$-right modules and as a sequence of $\Gam$-left modules because $_{\Gam}\Gam$ and $\Gam_{\Gam}$ are projective $\Gam$-modules. Thus we can apply \cite[Proposition~3.2]{BK99} with $\Omega_1 = \Gam = \Omega_2$ and $H_1 = \mathfrak{J} = H_2$ to this exact sequence (with the notation from \cite[Proposition~3.2]{BK99}). Note that by definition $\Lam =  \Gam/\mathfrak{J}$.
\end{proof}

From this \emph{standard presentation} of $\Lam$ we also obtain a \emph{standard presentation} of all finite-dimensional modules $M$ over $\Lam$ as the following corollary shows:

\begin{corollary}\label{cor:standardpresentation}
    Let $M$ be a finite-dimensional representation of $\Lam$. Then we have the exact sequence
    \begin{equation}
        \label{eq:standard:presentation:module}
        \bigoplus_{\rho \in R} \Lam e_{t(\rho)} \otimes K\rho \otimes M(s(\rho)) \xrightarrow{c_M} \bigoplus_{\alpha \in Q_1} \Lam e_{t(\alpha)} \otimes K\alpha \otimes M(s(\alpha)) \xrightarrow{d_M} \bigoplus_{k \in Q_0} \Lam e_k \otimes M(k) \xrightarrow{e_M} M \rightarrow 0,
    \end{equation}
    where the maps are given by
    \begin{align*}
        e_M(p \otimes m) &= M(p)m \in M(l) \quad (p \in e_l\Lam e_k, m \in M(k)), \\
        d_M(p \otimes \alpha \otimes m) &= (p\alpha) \otimes m - p \otimes M(\alpha)m, \\
        c_M(p \otimes \rho \otimes m)_{\alpha} &= \sum_{i}p\Delta_{\alpha}^{(1, i)}(\rho) \otimes \alpha \otimes M(\Delta_{\alpha}^{(2,i)}(\rho))m.
    \end{align*}
\end{corollary}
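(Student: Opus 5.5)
The plan is to apply the functor $-\hat{\otimes}_\Lam M$ to the exact sequence (\ref{lemma:Lamda:sequence}), or more precisely to its rightmost three terms, and then to identify the terms. Since $M$ is finite-dimensional, it is nilpotent. Hence it is a discrete (locally nilpotent) module, and completed tensor products against $M$ agree with ordinary ones. This gives
\[
(\Lam\hat{\otimes}_S\Lam)\otimes_\Lam M\cong \Lam\otimes_S M\cong\bigoplus_{k\in Q_0}\Lam e_k\otimes M(k)
\]
and
\[
(\Lam\hat{\otimes}_S A\hat{\otimes}_S\Lam)\otimes_\Lam M\cong\bigoplus_{\alpha\in Q_1}\Lam e_{t(\alpha)}\otimes K\alpha\otimes M(s(\alpha)).
\]
Under these identifications $\overline{e}$ becomes $e_M$ and $\overline{d}$ becomes $d_M$, directly from the formulas for $e$ and $d$.

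Exactness at the two right-hand places comes from the splitting of (\ref{lemma:Lamda:sequence}) as a sequence of right $\Lam$-modules. The sequence $\Lam\hat{\otimes}_S A\hat{\otimes}_S\Lam\to\Lam\hat{\otimes}_S\Lam\to\Lam\to0$ is right exact. The kernel of $\overline{e}$ equals the image of $\overline{d}$ and is a direct summand as a right module, because $\Lam_\Lam$ is projective. Tensoring over $\Lam$ with $M$ on the right therefore preserves exactness at these places. Alternatively, one can check directly that $e_M$ is surjective, via $e_k\otimes m\mapsto m$. One can also show $\ker e_M=\im d_M$ by the usual argument: modulo $\im d_M$, every $p\otimes m$ with $p$ a path equals $e_{t(p)}\otimes M(p)m$. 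For infinite sums $p\in\Lam e_k$, nilpotency of $M$ lets us truncate: only paths of length $<l_0$ contribute, and the tail lies in $\mathfrak{m}^{l_0}\Lam e_k\otimes M(k)$. That tail is handled by writing elements of $\mathfrak{m}^{l_0}$ as finite combinations $\sum u_p p$ over paths $p$ of length $l_0$, as in the proof of Proposition~\ref{prop:locnilpotent}.

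Exactness at the middle term is the main obstacle. Tensoring the full four-term sequence with $M$ is not exact in general. What we need is that the composite $\mathfrak{J}\to\mathfrak{J}/\mathfrak{J}^2\to(\cdots)\otimes_\Lam M$ has image equal to $\ker d_M$. We also need to replace $\mathfrak{J}/\mathfrak{J}^2\otimes_\Lam M$ by the generators $\rho\in R$.

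I would proceed as follows. Let $\Omega\coloneq\ker\overline{e}$, so $\overline{c}$ is an isomorphism from $\mathfrak{J}/\mathfrak{J}^2$ onto $\ker\overline{d}$ and the sequence $0\to\mathfrak{J}/\mathfrak{J}^2\to\Lam\hat{\otimes}_SA\hat{\otimes}_S\Lam\to\Omega\to0$ is exact. This sequence also splits as right $\Lam$-modules, because $\Omega$ is a right direct summand of the projective module $\Lam\hat{\otimes}_S\Lam$ and is therefore right projective. Tensoring with $M$ then keeps it exact. The resulting map $\mathfrak{J}/\mathfrak{J}^2\otimes_\Lam M\to\ker d_M$ is surjective, since $\ker d_M=\Omega\otimes_\Lam M$ inside $\Lam\otimes_SM$ by the same splitting.

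Finally, $\mathfrak{J}/\mathfrak{J}^2$ is generated as a bimodule by the classes of $\rho\in R$, up to closure issues, which again disappear after tensoring with the nilpotent $M$ by the finite-sum argument above. Hence the map
\[
\bigoplus_{\rho\in R}\Lam e_{t(\rho)}\otimes K\rho\otimes M(s(\rho))\to\mathfrak{J}/\mathfrak{J}^2\otimes_\Lam M,\qquad p\otimes\rho\otimes m\mapsto p\rho\otimes m,
\]
is surjective. Composing with the map induced by $\overline{c}=(\Delta_\alpha)_\alpha$ gives exactly the formula for $c_M$. Here $\Delta_\alpha(\rho)=\sum_i\Delta^{(1,i)}_\alpha(\rho)\otimes\alpha\otimes\Delta^{(2,i)}_\alpha(\rho)$, and the right factor acts on $m$ through $M$.
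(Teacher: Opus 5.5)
Your proposal is correct and is essentially the paper's proof. You tensor the sequence of Lemma~\ref{lemma:Lamda:sequence} with $M$, using that $\overline{e}$ splits as a map of right $\Lam$-modules, and you replace $\mathfrak{J}/\mathfrak{J}^2$ by the free bimodule on $R$; the paper does this substitution before tensoring via $\tilde{c}$, you do it afterwards, which is the same by right exactness.

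One slip: it is $\im d_M$ ($=\ker e_M$), not $\ker d_M$, that equals $\Omega\otimes_\Lam M$ inside $\Lam\otimes_S M$. Given that, right exactness of $-\otimes_\Lam M$ applied to $\mathfrak{J}/\mathfrak{J}^2\to\Lam\hat{\otimes}_SA\hat{\otimes}_S\Lam\to\Omega\to0$ already gives $\ker d_M=\im(\mathfrak{J}/\mathfrak{J}^2\otimes_\Lam M)$, so the step you call the main obstacle needs neither right projectivity of $\Omega$ nor a second splitting.
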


\begin{proof}
    Consider the map $\tilde{c}$ which is defined to be the composition of the canonical surjection
    \begin{equation*}
        \bigoplus_{\rho \in R}\Lam \, \hat{\otimes}_S \,K\rho\, \hat{\otimes}_S \,\Lam \rightarrow \mathfrak{J}/\mathfrak{J}^2, \; p \otimes_S \rho \otimes_S q \mapsto p\rho q + \mathfrak{J}^2
    \end{equation*}
    with the map $\overline{c}$ from Lemma~\ref{lemma:Lamda:sequence}. Then we obtain from Lemma~\ref{lemma:Lamda:sequence} trivially an exact sequence
    \begin{equation}
        \label{eq:standard:presentation:proof}
        \bigoplus_{\rho \in R}\Lam\, \hat{\otimes}_S \,K\rho \,\hat{\otimes}_S \,\Lam \xrightarrow{\tilde{c}} \Lam \hat{\otimes} A \hat{\otimes} \Lam \xrightarrow{\overline{d}} \Lam \hat{\otimes} \Lam \xrightarrow{\overline{e}} \Lam \rightarrow 0.
    \end{equation}
    Next, since $\Lam_\Lam$ is a projective $\Lam$-module, the map $\overline{e}$ splits as a morphism of $\Lam$-right modules. Thus, since the functor $ - \otimes_\Lam M$ is right exact, the sequence in (\ref{eq:standard:presentation:proof}) remains exact under this functor. It remains to show that $e_M= \overline{e} \otimes_{\Lam} M$, $d_M= \overline{d} \otimes_{\Lam} M$ and $c_M= \tilde{c} \otimes_{\Lam} M$. This follows by looking at how $c,e$ and $d$ were defined.
\end{proof}

Note that for $\Lambda$ the path algebra of an acyclic quiver $Q$, the projective presentation in Corollary~\ref{cor:standardpresentation}
turns into Crawley-Boevey's \emph{standard resolution} from \cite[p. 7]{CB92}.

\begin{example}
    Let $\Lam = K\comp{Q}/\mathfrak{J}$ with quiver $Q$ given by
    \begin{equation*}
        \begin{tikzcd}
        1 \arrow[r, "\alpha"] & 2 \arrow[r, "\beta"] & 3
        \end{tikzcd}
    \end{equation*}
    and $\mathfrak{J} = \gen{\beta\alpha}$. Consider $M=P_1 \oplus S_3$, direct sum of a projective and a simple module. Then the sequence in (\ref{eq:standard:presentation:module}) becomes
    \begin{equation*}
        \Lam e_3 \xrightarrow{c_M} (\Lam e_2 \oplus \Lam e_3) \xrightarrow{d_M} (\Lam e_1 \oplus \Lam e_2 \oplus \Lam e_3) \xrightarrow{e_M} M \rightarrow 0.
    \end{equation*}
    where the maps are given by the explicit formulas above.
\end{example}

\section{Minimal presentations derived from standard complexes} \label{sec:main}
In this section, we will refine the projective presentation 
(\ref{eq:standard:presentation:module}) to a minimal projective presentation. 

\subsection{Standard complexes} We generalize Derksen--Weyman--Zelevinsky's 
complex (\ref{eq:complex:jacobian}) as follows:

\begin{definition}
    \label{definition:complex:vector:spaces}
    Let $M$ be a finite-dimensional representation of $\Lam$ and $k \in Q_0$. Then we have the following complex of vector spaces
    \begin{equation}
        \label{eq:complex:vector:space}
M_{\text{rel}}'(k)  \xrightarrow{M(\gamma_k')} M_{\text{in}}(k) \xrightarrow{M(\alpha_k)} M(k) \to 0
    \end{equation}
    where
    \begin{align}
        M_{\text{in}}(k) &= \bigoplus_{\alpha \in Q_1 : \; t(\alpha)=k} M_{\alpha}  &\text{with } M_{\alpha} \coloneq M(s(\alpha)), \label{eq:M:in}\\
        M_{\text{rel}}'(k) &= \bigoplus_{\rho \in R : \; t(\rho)=k} M_{\rho}'  &\text{with } M_{\rho}' \coloneq M(s(\rho)), \label{eq:M:rel}\\
        M(\alpha_k)(m_{\alpha}) &=M(\alpha)m_\alpha, \label{eq:M:alpha:k}\\
        M(\gamma_k')_{\alpha}(m_{\rho}') &= M(l_{\alpha}(\rho))m_{\rho}'. \label{eq:M:gamma:k}
    \end{align}
    Note that $M(\alpha_k) \circ M(\gamma_k')=0$ since for $\rho \in R$ with $t(\rho)=k$ we have
    \begin{equation*}
        \sum_{\alpha \in Q_1: \; t(\alpha)=k} \alpha \cdot l_\alpha(\rho) = \rho.
    \end{equation*}
We call (\ref{definition:complex:vector:spaces}) a \emph{standard complex} for $M$.
\end{definition}

\begin{example}
    Let $Q$ be the quiver
    \[
    \xymatrix@-3ex{
    & 2 \ar[ddr]^\beta
    \\
    &
    \\
    1 \ar[uur]^\alpha && 3 \ar[ll]^\gamma
    }
    \]
    and let $\Gam = K\comp{Q}$.
    Consider the relations
    \begin{equation*}
        \rho_1 = \gamma\beta \in e_1 \Gam e_2, \qquad \rho_2 = \alpha\gamma \in e_2 \Gamma e_3, \qquad \rho_3 = \beta\alpha \in e_3 \Gam e_1.
    \end{equation*}
    Let $\Lam = \Gam/\gen{\rho_1,\rho_2,\rho_3}$.

    Let $M = P_1 \oplus P_2$ be the direct sum of the two indecomposable projective modules associated to the vertices $1$ and $2$, respectively.
    We have $M(1) = Ke_1$, $M(2) = K\gen{e_2,\alpha}$, $M(3) = K\gen{e_3,\beta}$, and
    \begin{equation*}
        M_{\text{in}}(i)=M(i-1),
        \qquad 
        M_{\text{rel}}'(i)=M(i-2),
    \end{equation*}
    where the indices are taken modulo $3$. 
We get the maps
\begin{align*}
M(\alpha_2) = M(\alpha):M(1)&\to M(2)
\\
e_1 &\mapsto \alpha,
\\
M(\alpha_3) = M(\beta): M(2) &\to M(3)
\\
e_2 &\mapsto \beta
\\
\alpha &\mapsto 0.
\end{align*}
    
    Next, we compute the maps $l_\alpha(\rho)$. We have
    \begin{align*}
        l_\alpha(\rho_1)=0,\qquad l_\beta(\rho_1)=0, \qquad l_\gamma(\rho_1)=\beta, \\
        l_\alpha(\rho_2)=\gamma,\qquad l_\beta(\rho_2)=0, \qquad l_\gamma(\rho_2)=0, \\
        l_\alpha(\rho_3)=0,\qquad l_\beta(\rho_3)=\alpha, \qquad l_\gamma(\rho_3)=0.
    \end{align*}
    Hence we get for example the map
    \begin{align*}
M(\gamma_3') = M(\alpha): M_{\rm rel}'(3) = M(1) &\to M_{\rm in}(3) = M(2)
\\
e_1 &\mapsto \alpha
    \end{align*}
    Hence for example the map $M(\gamma'_3):M_{\text{rel}}'(3)=M(1)\to M_{\text{in}}(3)=M(2)$ is given by
    \begin{equation*}
        M(\gamma'_3)(e_1)=M(\alpha)e_1 = \alpha.
    \end{equation*}
    Finally, we compute
    \begin{equation*}
        M(\alpha_3) \circ M(\gamma'_3) (e_1) = M(\alpha_3)(\alpha)=M(\beta)\alpha=0.
    \end{equation*}
    Thus the sequence is indeed a complex.    
\end{example}

\begin{remark}
    \label{remark:dim:min:proj}
    Denote by $Q_l$ the set of paths of length $l$ in $Q$. Then $R_l \coloneq R \cup Q_l$ is a system of generators of $\mathfrak{J}_l \coloneq \mathfrak{J} + \mathfrak{m}^l$. Similarly, we write $\overline{R} = R \cup R'$ for an appropriate set of generators of the closure $\overline{\mathfrak{J}}$. Since $M$ is in particular a finite-dimensional representation of $\Lam$, we can define 
        \[
        n(M)\coloneq\min\Set{l \geq 0 \mid \mathfrak{m}^lM=0} < \infty.
        \]
        Moreover, of course we have $\mathfrak{m}^lM=0$ for $l\geq n(M)$. Thus, $M$ descends in this case to a representation of the truncation $\Lam_l$. In particular, $M$ may be viewed as a representation of $\bLam$. Let us write 
        \begin{equation*}
            M(\gamma_{k,l}')\colon M_{\text{rel},l}'(k) \rightarrow M_{\text{in}}(k)
        \end{equation*}
        for the map $M(\gamma_k')$ from the above definition (\ref{eq:M:gamma:k}), but associated to the set of relations $R_l$ for $l \geq n(M)$. Analogously, we denote $M(\overline{\gamma}_k')$ for the map associated to the set of relations $\overline{R}$. By definition of $n(M)$, it is immediate that
        \begin{equation}\label{eq:gammatruncation}
            \im(M(\gamma_{k,l}')) = \im(M(\gamma_k')) = \im(M(\overline{\gamma}_k'))
        \end{equation}
        for $l \geq n(M) + 1$ since relations of longer length do not affect the image of $M(\gamma_k')$.
\end{remark}

\subsection{Minimal projective presentations}\label{subsec:5.2} The generalized complex of vector spaces in (\ref{eq:complex:vector:space}) allows us to prove our main result, describing a standard minimal projective presentation of finite-dimensional modules over $\Lam$, generalizing the one for Jacobian algebras presented in \cite{DWZ10}. With the notations from Definition~\ref{definition:complex:vector:spaces} we introduce the following auxiliary vector spaces for each $k \in Q_0$. We define subspaces $M^{(0)}(k)$ and $M^{(1)}(k)$ of $M(k)$, as well as for $j = 0,1,2$ subspaces $M_{\text{in}}^{(j)}(k)$ of $M_{\text{in}}(k)$ such that
\begin{align*}
    M^{(1)}(k) &\coloneq \im(M(\alpha_k)), \\
    M^{(0)}(k) \oplus M^{(1)}(k) &=M(k), \\
    M_{\text{in}}^{(2)}(k) &\coloneq \im(M(\gamma_k')), \\
    M_{\text{in}}^{(1)}(k) \oplus M_{\text{in}}^{(2)}(k) &= \ker(M(\alpha_k)), \\
    M_{\text{in}}^{(0)}(k) \oplus \ker(M(\alpha_k)) &= M_{\text{in}}(k),
\end{align*}
similar to \cite[eq. 10.15]{DWZ10}. Then we obtain from (\ref{eq:standard:presentation:module}) the following projective presentation of $M$:
\begin{equation*}
    P_M \xrightarrow{d_M} K_M \xrightarrow{e_M} M \rightarrow 0,
\end{equation*}
where we have set
\begin{align*}
    K_M  &\coloneq \bigoplus_{k \in Q_0} \Lam e_k \otimes M(k), \\
    P_M &\coloneq \bigoplus_{k \in Q_0} \Lam e_k \otimes M_{\text{in}}(k) \cong \bigoplus_{\alpha \in Q_1} \Lam e_{t(\alpha)} \otimes K\alpha \otimes M(s(\alpha)).
\end{align*}
Here, the maps are as in (\ref{eq:standard:presentation:module}). With a slight abuse of notation we may rewrite $d_M = d_1 + d_2$ such that for $p \otimes m \in \Lam e_k \otimes M_{\text{in}}(k)$ we have according to the definition of $M_{\text{in}}(k)$ that
\begin{align*}
    d_1(p \otimes m) &\coloneq \sum_{\alpha \in Q_1: \; t(\alpha)=k} p\alpha \otimes \pr_{\alpha}(m), \\
    d_2(p \otimes m) &\coloneq - p \otimes M(\alpha_k)m.
\end{align*}
Here, $\pr_{\alpha} \colon M_{\text{in}}(k) \rightarrow M_\alpha$ is the canonical projection, where we recall $M_{\alpha}\coloneq M(s(\alpha))$. We finally set
\begin{align*}
    K_M^{(j)} &\coloneq \bigoplus_{k \in Q_0} \Lam e_k \otimes M^{(j)}(k) \text{ for } j = 0,1, \\
    P_M^{(j)} &\coloneq \bigoplus_{k \in Q_0} \Lam e_k \otimes M_{\text{in}}^{(j)}(k) \text{ for } j = 0,1,2.
\end{align*}
Thus, $K_M = K_M^{(0)} \oplus K_M^{(1)}$ and $P_M = P_M^{(0)} \oplus P_M^{(1)} \oplus P_M^{(2)}$ as projective $\Lam$-modules.

The following result is a generalization of \cite[Proposition~10.4]{DWZ10} from certain finite-dimensional Jacobian algebras to basic algebras which are topologically finitely generated. Our result implies in particular, that the statement of \cite[Proposition~10.4]{DWZ10} holds for arbitrary Jacobian algebras with reduced potential. Our proof follows roughly the strategy of Derksen-Weyman-Zelevinsky in \cite[Proposition~10.4]{DWZ10}; however, we use the non-commutative Nakayama Lemma in place of the nilpotency of the Jacobson radical. 

\begin{theorem}\label{thm:main2}
    Let $\mathfrak{J}\subseteq \mathfrak{m}^2$ be an ideal of $\Gam = K\comp{Q}$, and let $M$ be a finite-dimensional representation of $\Lam = \Gam/\mathfrak{J}$. With the above notation, the following hold:
    \begin{enumerate}
        \item The restriction $e_M^{(0)}$ of $e_M$ to $K_M^{(0)}$ is a minimal projective cover of $M$.
        \item We have $K_M = K_M^{(0)} \oplus d_M(P_M^{(0)})$. Thus, the canonical projection $\pr_K^{(0)} \colon K_M \rightarrow K_M^{(0)}$ along $d_M(P^{(0)}_M)$ is defined.
        \item With $\overline{d}_M\colon P_M^{(1)} \rightarrow K_M^{(0)}, p \mapsto \pr_K^{(0)}(d_M(p))$ holds $\im(\overline{d}_M)=\ker(e_M^{(0)})$.
        \item We have $\ker(\overline{d}_M) \subseteq \mathfrak{m}P_M^{(1)}$.
    \end{enumerate}
    In particular,
    \begin{equation*}
        P_M^{(1)} \xrightarrow{\overline{d}_M} K_M^{(0)} \xrightarrow{e_M^{(0)}} M \rightarrow 0
    \end{equation*}
    is a minimal projective presentation of $M$.
\end{theorem}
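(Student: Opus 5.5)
We follow the strategy of \cite[Proposition~10.4]{DWZ10}, working throughout with the standard presentation of Corollary~\ref{cor:standardpresentation}. Nilpotency of the radical is replaced by Lemma~\ref{Nakayama:lemma}, applied to the finitely generated projective $\Lam$-modules $K_M$ and $P_M$. Since $\Lam e_k\otimes V/\mathfrak{m}(\Lam e_k\otimes V)\cong S_k\otimes V$, reducing modulo $\mathfrak{m}$ turns each map into linear algebra on the vertex spaces. We prove the four parts in order.

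For (i), the map $e_M$ induces $K_M/\mathfrak{m}K_M=\bigoplus_k S_k\otimes M(k)\to M/\mathfrak{m}M$. Here $\mathfrak{m}M$ at vertex $k$ is $\sum_{t(\alpha)=k}M(\alpha)M(s(\alpha))=M^{(1)}(k)$, because $\mathfrak{m}$ is generated by the arrows. So $M/\mathfrak{m}M\cong\bigoplus_k S_k\otimes M^{(0)}(k)$, and $e_M^{(0)}$ induces an isomorphism on tops. Nakayama~(ii) gives surjectivity. The kernel lies in $\mathfrak{m}K_M^{(0)}$, so $e^{(0)}_M$ is a projective cover.

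For (ii), write $d_M=d_1+d_2$. Modulo $\mathfrak{m}K_M$ we have $d_1\equiv 0$, since $p\alpha\in\mathfrak{m}$. On $P_M^{(0)}$ the map $d_2$ is $-\mathrm{id}\otimes M(\alpha_k)$, which maps $M_{\rm in}^{(0)}(k)$ isomorphically onto $M^{(1)}(k)$. Hence $K_M^{(0)}\oplus P_M^{(0)}\to K_M$, $(x,y)\mapsto x+d_M(y)$, is an isomorphism on tops between projectives of the same (finite) multiplicities. By Nakayama~(ii) it is surjective. A surjection between isomorphic finitely generated projectives over the semiperfect ring $\Lam$ is an isomorphism: it splits, and its kernel $\ker$ satisfies $\ker/\mathfrak{m}\ker=0$, so the kernel vanishes. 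This gives the direct sum decomposition.

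For (iii), first note $\ker e_M=\im d_M$ by exactness. Also $e_M(d_M(P_M^{(0)}))=0$, so $\ker e_M^{(0)}=\pr_K^{(0)}(\ker e_M)=\pr_K^{(0)}d_M(P_M)$. Since $\pr_K^{(0)}$ kills $d_M(P_M^{(0)})$, it remains to show that $\pr^{(0)}_K d_M(P_M^{(2)})\subseteq\pr^{(0)}_K d_M(P_M^{(1)})$, or better, that $d_M(P^{(2)}_M)\subseteq d_M(P_M^{(0)}\oplus P_M^{(1)})$. For this we use $c_M$: $d_M\circ c_M=0$. By the first formula (\ref{eq:delta:alpha:rho}), $c_M(e_{t(\rho)}\otimes\rho\otimes m)$ equals $e_k\otimes M(\gamma_k')(m)$ modulo $\mathfrak{m}P_M$. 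So $\im c_M+\mathfrak{m}P_M\supseteq P_M^{(2)}$, giving $P_M=P_M^{(0)}+P_M^{(1)}+\im c_M+\mathfrak{m}P_M$. Nakayama~(i) yields $P_M=P_M^{(0)}+P_M^{(1)}+\im c_M$, and applying $d_M$ gives the claim.

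For (iv), let $x\in P_M^{(1)}$ with $\overline d_M(x)=0$, so $d_M(x)=d_M(y)$ for some $y\in P_M^{(0)}$. Reduce modulo $\mathfrak{m}$: $d_M(x)\equiv d_2(x)$, and $d_2(x)$ vanishes mod $\mathfrak{m}$ since $M_{\rm in}^{(1)}\subseteq\ker M(\alpha_k)$. Thus $d_2(y)\in\mathfrak{m}K_M$, and injectivity of $M(\alpha_k)$ on $M^{(0)}_{\rm in}$ forces $y\in\mathfrak{m}P_M$. We then need $x\in\mathfrak{m}P_M$. This is the main obstacle: we must exclude $x$ with nonzero top in $M_{\rm in}^{(1)}$ and $d_M(x-y)=0$. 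Using $\ker d_M=\im c_M$ (exactness), we get $x-y\in\im c_M$, whose top lies in $M_{\rm in}^{(2)}$. Since the top of $y$ is zero and $M_{\rm in}^{(1)}\cap M_{\rm in}^{(2)}=0$, the top of $x$ vanishes, so $x\in\mathfrak{m}P_M^{(1)}$. Minimality of the presentation then follows from (i)--(iv).
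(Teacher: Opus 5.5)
Your proof is correct. Parts (i), (ii) and (iv) follow the paper's argument up to cosmetics:
\begin{itemize}
\item In (ii) you apply Nakayama (ii) to $K_M^{(0)}\oplus P_M^{(0)}\to K_M$, where the paper applies Nakayama (i) to the endomorphism $f$ of $K_M$. This is the same map up to composing with $\mathrm{id}\oplus d_0^{-1}$.
\item In (iv) your preliminary step $y\in\mathfrak{m}P_M$ is superfluous. Since $\im c_M\subseteq\mathfrak{m}P_M^{(0)}\oplus\mathfrak{m}P_M^{(1)}\oplus P_M^{(2)}$, the $P_M^{(1)}$-component of $x-y$ already lies in $\mathfrak{m}P_M^{(1)}$.
\end{itemize}

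The genuine difference is in (iii). The paper applies Nakayama (i) inside $N=\ker(e_M^{(0)})$ with $N'=\pr_K^{(0)}(d_M(P_M^{(2)}))$, and asserts $N'\subseteq\mathfrak{m}\ker(e_M^{(0)})$ ``by definition of $d_1$''. You instead apply Nakayama (i) to $P_M$ itself. Using $P_M^{(2)}\subseteq\im c_M+\mathfrak{m}P_M$ and $d_M\circ c_M=0$, you get $d_M(P_M)=d_M(P_M^{(0)})+d_M(P_M^{(1)})$.

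Your route is the more robust one. The containment $d_1(P_M)\subseteq\mathfrak{m}K_M$ by itself only yields $N'\subseteq\mathfrak{m}K_M^{(0)}\cap\ker(e_M^{(0)})$, which is in general strictly larger than $\mathfrak{m}\ker(e_M^{(0)})$. Moreover, that step never uses that $M_{\rm in}^{(2)}(k)=\im M(\gamma_k')$. For $M=S_i$ with an arrow leaving $i$, and $M_{\rm in}^{(2)}(k)$ replaced by all of $\ker M(\alpha_k)$, the same reasoning would give $\ker(e_M^{(0)})=0$, whereas in fact it is $\mathfrak{m}P_i\neq 0$.

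The paper's inclusion does hold, but precisely because of your observation. It gives $d_M(P_M^{(2)})\subseteq d_M(\mathfrak{m}P_M)=\mathfrak{m}\,d_M(P_M)$, hence $N'\subseteq\mathfrak{m}\ker(e_M^{(0)})$.

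So in your version the relations enter (iii) explicitly, through the congruence $c_M(e_k\otimes\rho\otimes m)\equiv e_k\otimes M(\gamma_k')m \pmod{\mathfrak{m}P_M}$, which the paper uses only in (iv). As a by-product, (iii) and (iv) become the two halves of a single fact: $\ker d_M=\im c_M$ and $P_M^{(2)}$ have the same image in $P_M/\mathfrak{m}P_M$.
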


\begin{proof}
    \begin{enumerate}
        \item We claim that $M/\mathfrak{m}M \cong \bigoplus_{k \in Q_0} S_k \otimes M^{(0)}(k)$. Indeed, we can rewrite $M/\mathfrak{m}M \cong \bigoplus_{k \in Q_0} S_k \otimes M(k)$, where $S_k = \Lam e_k /\mathfrak{m} e_k$ is the simple module at vertex $k$. Now we can decompose $M(k) = M^{(0)}(k) \oplus M^{(1)}(k)$ with $ M^{(1)}(k)=\im(M(\alpha_k))$. Since $\mathfrak{m}$ is generated by arrows $\alpha \in Q_1$ and acts on $M(k)$ exactly via the maps $M(\alpha)$, we obtain $(\mathfrak{m}M)(k)= M^{(1)}(k)$.
        
        Moreover, we calculate
        \begin{equation*}
            K_M^{(0)}/\mathfrak{m}K_M^{(0)} = \bigoplus_{k \in Q_0} (\Lam e_k/ \mathfrak{m}\Lam e_k) \otimes M^{(0)}(k) \cong \bigoplus_{k \in Q_0} S_k \otimes M^{(0)}(k).
        \end{equation*}
        Thus, $e_M$ induces an isomorphism $K_M^{(0)}/(\mathfrak{m}K_M^{(0)}) \rightarrow M/\mathfrak{m}M$, since it is the identity. Then, our claim follows from the 
        Nakayama Lemma~\ref{Nakayama:lemma}(ii).
        \item We observe that $d_2(P_M^{(j)})=0$ for $j = 1,2$, because of $M_{\text{in}}^{(1)}(k) \oplus M_{\text{in}}^{(2)}(k) = \ker(M(\alpha_k))$.
        Moreover, $d_2$ induces an isomorphism $d_0: P_M^{(0)} \rightarrow K_M^{(1)}$ by restriction, as $M(\alpha_k)\colon M_{\text{in}}^{(0)}(k) \rightarrow M^{(1)}(k)=\im(M(\alpha_k))$ is an isomorphism.
        We have $d_1(P_M)\subseteq \mathfrak{m}K_M$ by definition of $d_1$. Let us show that the endomorphism
        \begin{equation*}
            f \colon K_M=K_M^{(0)} \oplus K_M^{(1)} \rightarrow K_M, (k_0,k_1) \mapsto k_0 + d_M(d_0^{-1}(k_1))
        \end{equation*}
        is surjective. Indeed, we can apply part (i) of the Nakayama Lemma with $N\coloneq K_M, N'\coloneq d_1(P_M)$ and $N'' \coloneq \im(f)$. Then, $N' \subseteq \mathfrak{m}N$ is clear, and for $N' + N'' =N$, we calculate: Take $(x_0,x_1) \in K_M= K_M^{(0)} \oplus K_M^{(1)}$ and write $x_1=d_0(p)$ with $p \in P_M^{(0)}$. Then $(x_0,x_1)=(x_0,0) + (0, x_1) = (x_0,0) + (d_M(p)-d_1(p))$, but $(x_0,0)=f(x_0,0), d_M(p)=f(0,x_1) \in \im(f)$ and $d_1(p) \in d_1(P_M)=N'$ proving 
        $K_M \subseteq d_1(P_M) + \im(f)$. By the Nakayama Lemma, $\im(f)=K_M$, so $f$ is surjective.

        Since $K_M$ is a projective module, $\ker(f)$ is a direct summand of $K_M = K_M^{(0)} \oplus K_M^{(1)}$. By the Krull-Remak-Schmidt-property (see Section~\ref{section:projective:modules}) for projective $\Lam$-modules applied to $K_M =\ker(f) \oplus K_M$, we conclude $\ker(f)=0$. Our claim follows since the automorphism $f$ takes $K_M^{(0)}$ to $K_M^{(0)}$ and $K_M^{(1)}$ to $d_M(P_M^{(0)})$.
        \item We have
        \begin{equation}
            \label{eq:ker:e:M:0}
            \ker(e_M^{(0)}) = \ker(e_M) \cap K_M^{(0)} = \im(d_M) \cap K_M^{(0)} = \pr_K^{(0)}(d_M(P_M^{(1)}\oplus P_M^{(2)})).
        \end{equation}
        In fact, the first two equalities are clear from our constructions. For the last step, note that since $\pr_K^{(0)}$ is the projection along $d_M(P_M^{(0)})$, it annihilates $d_M(p_0)$, and therefore
        \begin{equation*}
           \pr_K^{(0)}(d_M(p_0+p_1+p_2)) = \pr_K^{(0)}(d_M(p_1+p_2)).
        \end{equation*}
        Moreover, write $d_M(p_1 + p_2) = k_0 + d_M(p_0)$ for some $k_0 \in K_M^{(0)}$ and $p_0 \in P_M^{(0)}$. In other words,
        \begin{equation*}
            k_0 = \pr_K^{(0)}(d_M(p_1 + p_2)) = d_M(-p_0+p_1+p_2),
        \end{equation*}
        since $k_0 = d_M(p_1+p_2) -  d_M(p_0)$. Thus, $e_M^{(0)}(k_0)=e_M(k_0) = e_M(d_M(-p_0+p_1+p_2))=0$.
        In particular, $\ker(e_M^{(0)})=\pr_K^{(0)}(d_M(P_M^{(1)}\oplus P_M^{(2)}))$ is finitely generated, since images of finitely generated modules are finitely generated. Then (iii) again follows from the Nakayama Lemma applied to $N \coloneq \ker(e_M^{(0)}), N'\coloneq \pr_K^{(0)}(d_M(P_M^{(2)}))$ and $N'' \coloneq \im(\overline{d}_M)$. For this, note that $N=N'+N''$ which is just a rewriting of (\ref{eq:ker:e:M:0}). Moreover, 
        \begin{equation*}
             \pr_K^{(0)}(d_M(P_M^{(2)})) = \pr_K^{(0)}(d_1(P_M^{(2)})+ d_2(P_M^{(2)}))= \pr_K^{(0)}(d_1(P_M^{(2)}))\subseteq \mathfrak{m}\ker(e_M^{(0)})
        \end{equation*} 
        by definition of $d_1$ and (\ref{eq:ker:e:M:0}).
        \item Observe that $P_M^{(2)}$ is generated by elements of the form
        \begin{equation*}
             e_k \otimes M(\gamma_k')m \quad (k \in Q_0, m \in M_{\text{rel}}'(k)).
        \end{equation*}
        In view of the definition of $M(\gamma_k')$ in (\ref{eq:M:gamma:k}) we have 
        \begin{equation*}
             e_k\otimes M(\gamma_k')m = \sum_{\alpha \in Q_1 : \; t(\alpha)=k} e_k \otimes M(l_{\alpha}(\rho))m
        \end{equation*}
         and by definition 
         \begin{equation*}
            (c_M(e_k \otimes m))_{\alpha} = \sum_{i}\Delta_{\alpha}^{(1, i)}(\rho) \otimes M(\Delta_{\alpha}^{(2,i)}(\rho))m.
        \end{equation*}
        Because of (\ref{eq:delta:alpha:rho}), we have
        \begin{equation*}
             e_k \otimes M(\gamma_k')m - c_M(e_k \otimes m) \in \mathfrak{m}P_M.
        \end{equation*}    
        This implies that 
        \begin{equation*}
            \ker(d_M) = \im(c_M) \subseteq \mathfrak{m}P_M^{(0)} \oplus \mathfrak{m}P_M^{(1)} \oplus P_M^{(2)}
        \end{equation*}
        since $e_k\otimes M(\gamma_k')m \in P_M^{(2)}$. Now, by construction,
        \begin{equation*}
            \ker(\overline{d}_M)=(\ker(d_M)+P_M^{(0)}) \cap P_M^{(1)} \subseteq (P_M^{(0)} \oplus \mathfrak{m}P_M^{(1)} \oplus P_M^{(2)}) \cap P_M^{(1)} = \mathfrak{m}P_M^{(1)}.
        \end{equation*}
        The first equality follows since for  $p_1 \in P_M^{(1)}$ we have $p_1 \in \ker(\overline{d}_M)$ iff $\pr_K^{(0)}(d_M(p_1))=0$ iff $p_1 \in \ker(d_M)+P_M^{(0)}$. This implies that $\overline{d}_M$ is radical, i.e. we obtain a minimal projective presentation of $M$.
    \end{enumerate}\end{proof}

\begin{theorem}\label{thm:main3}
    Let $\Lam = \Gam/\mathfrak{J}$ for an ideal $\mathfrak{J}$ which is generated by a set $R$ of relations, and let 
    $\bLam \coloneq \Gam/\overline{\mathfrak{J}}$ as in Section~\ref{section:completed:path:algebras}. 
    Let $M \in {\rm mod}(\Lam)$.
    Then there is a minimal projective presentation
    \begin{equation*}
        \bigoplus_{k \in Q_0} (\Lam e_k)^{\dim(\ker(M(\alpha_k))/\im(M(\gamma_k')))} \rightarrow \bigoplus_{k \in Q_0} (\Lam e_k)^{\dim\coker(M(\alpha_k))} \rightarrow M \rightarrow 0
    \end{equation*}
    in $\Mod(\bLam)$,
    where the maps $M(\gamma_k')$ are defined in terms of the set $R$ of generators of $\mathfrak{J}$ as in (\ref{eq:M:gamma:k}).
    Furthermore, the $\Lam$-module $M$ can be seen as a $\bLam$-module, and has a minimal projective presentation
    \begin{equation*}
        \bigoplus_{k \in Q_0} (\bLam e_k)^{\dim(\ker(M(\alpha_k))/\im(M(\gamma_k')))} \rightarrow \bigoplus_{k \in Q_0} (\bLam e_k)^{\dim\coker(M(\alpha_k))} \rightarrow M \rightarrow 0.
    \end{equation*}
    in $\Mod(\bLam)$.
\end{theorem}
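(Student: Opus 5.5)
The plan is to deduce Theorem~\ref{thm:main3} almost directly from Theorem~\ref{thm:main2}, counting ranks and then passing from $\Lam$ to $\bLam$ via the closure. Theorem~\ref{thm:main2} gives the minimal projective presentation
\[
P_M^{(1)} \xrightarrow{\overline{d}_M} K_M^{(0)} \xrightarrow{e_M^{(0)}} M \to 0
\]
with $K_M^{(0)}=\bigoplus_k \Lam e_k\otimes M^{(0)}(k)$ and $P_M^{(1)}=\bigoplus_k \Lam e_k\otimes M^{(1)}_{\rm in}(k)$.

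The multiplicities are read off from the complementary-subspace decompositions of Section~\ref{subsec:5.2}. Since $M^{(0)}(k)\oplus \im M(\alpha_k)=M(k)$, we get $\dim M^{(0)}(k)=\dim\coker M(\alpha_k)$. Since $M^{(1)}_{\rm in}(k)\oplus\im M(\gamma'_k)=\ker M(\alpha_k)$, we get $\dim M^{(1)}_{\rm in}(k)=\dim\ker M(\alpha_k)/\im M(\gamma'_k)$. Choosing bases, $\Lam e_k\otimes V\cong(\Lam e_k)^{\dim V}$, which yields the first presentation as a minimal presentation of $\Lam$-modules. Here minimality means that $e_M^{(0)}$ is a projective cover and $\ker\overline d_M\subseteq\mathfrak m P_M^{(1)}$. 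The first displayed presentation in the statement then presumably should be read ``in $\Mod(\Lam)$''. I would note this, since $\Lam e_k$ need not be a $\bLam$-module.

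For $\bLam$, I would not redo the argument. Instead I would apply Theorem~\ref{thm:main2} to the ideal $\overline{\mathfrak J}\subseteq\mathfrak m^2$ with generating set $\overline R=R\cup R'$ as in Remark~\ref{remark:dim:min:proj}. Theorem~\ref{thm:main2} holds for an arbitrary ideal in $\mathfrak m^2$ together with a generating set, so this gives a minimal presentation over $\bLam$ with multiplicities $\dim\coker M(\alpha_k)$ and $\dim\ker M(\alpha_k)/\im M(\overline\gamma'_k)$.

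Two points need checking. First, $M$ is a $\bLam$-module: $\mathfrak m^{n(M)}M=0$ and $\overline{\mathfrak J}\subseteq\mathfrak J+\mathfrak m^{n(M)}$, so $\overline{\mathfrak J}M=0$. Moreover, $M(\alpha_k)$ depends only on $M$. Second, equation~\eqref{eq:gammatruncation} gives $\im M(\overline\gamma'_k)=\im M(\gamma'_k)$, so the multiplicities coincide with those defined via $R$.

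I expect the main obstacle to be this last identification. One must justify that the extra generators in $R'\subseteq\overline{\mathfrak J}$ contribute nothing new to $\im M(\gamma'_k)$. The cleanest route is to show that, for each $\alpha$, the map $\rho\mapsto M(l_\alpha(\rho))m$ on $e_k\overline{\mathfrak J}$ has the same image as on $e_k\mathfrak J$.

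To prove this, I would write $\rho\in\overline{\mathfrak J}$ as $\rho=j+u$ with $j\in\mathfrak J$ and $u\in\mathfrak m^{n(M)+1}$. The vector $(M(l_\alpha(u)))_\alpha$ vanishes, since $l_\alpha(u)\in\mathfrak m^{n(M)}$ kills $M$. For $j=\sum_r a_r\rho_r b_r$ (a finite sum, $\rho_r\in R$), I would compute $l_\alpha(j)$ via $l_\alpha(a\rho b)=l_\alpha(a)\rho b$ plus, if $a\in S$, the term $a\,l_\alpha(\rho)b$. Terms of the form $l_\alpha(a)\rho b$ act by zero on $M$ because $\rho M=0$. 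The surviving terms are $(M(l_\alpha(\rho_r))M(b_r)m)_\alpha$, which lie in $\im M(\gamma'_k)$. This is the argument behind ``relations of longer length do not affect the image'' in Remark~\ref{remark:dim:min:proj}.
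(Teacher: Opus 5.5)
Your proposal is correct and is essentially the paper's proof: Theorem~\ref{thm:main2} together with the dimension count from the complementary-subspace decompositions gives the presentation over $\Lam$ (which, as you rightly point out, should be read in $\Mod(\Lam)$, as in Theorem~\ref{thm:main1}), and the $\bLam$-version follows from the same construction for $\overline{\mathfrak J}$ with generators $\overline R$ together with (\ref{eq:gammatruncation}). Your verification of (\ref{eq:gammatruncation}) fills in what the paper only asserts; the one slip is that the degree-zero contribution to $l_\alpha(a\rho b)$ is $\lambda\, l_\alpha(\rho)b$, with $\lambda$ the coefficient of $e_{t(\rho)}$ in $a$ (present whenever $a$ has a nonzero degree-zero part, not only when $a\in S$), rather than $a\,l_\alpha(\rho)b$, and this does not affect your conclusion.
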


\begin{proof}
By Theorem~\ref{thm:main2} there is a minimal projective presentation
\[
P_M^{(1)} \xrightarrow{\overline{d}_M} K_M^{(0)} \xrightarrow{e_M^{(0)}} M \to 0
\]
where
\begin{align*}
P_M^{(1)} &= \bigoplus_{k \in Q_0} \Lam e_k \otimes M_{\rm in}^{(1)}(k),
&
M_{\rm in}^{(1)}(k) \oplus \im(M(\gamma_k')) &= \ker(M(\alpha_k)),
\\
K_M^{(0)} &= \bigoplus_{k \in Q_0} \Lam e_k \otimes M^{(0)}(k),
    &
    M^{(0)}(k) \oplus \im(M(\alpha_k)) &= M(k).
\end{align*}
This shows the first part of the theorem.
Since every finite-dimensional $\Lam$-module $M$ factors through $\bLam = \Gam/\overline{\mathfrak{J}}$ and the maps $M(\alpha_k), M(\gamma_k')$ only depend on finite-length relations (see Remark~\ref{remark:dim:min:proj} (in particular (\ref{eq:gammatruncation})) for more details), 
the second part of the theorem holds.    
\end{proof}


\subsection{Minimal injective presentations} 
In the last section, we have constructed a minimal projective presentation. We will now dualize this similar to \cite{DWZ10}, which will give us a statement analogous to (\ref{eq:min:inj:copres:jacobian}). 
The following definition is dual to Definition~\ref{definition:complex:vector:spaces}.

\begin{definition}
    \label{definition:complex:vector:spaces:dual}
    Let $M$ be a finite-dimensional representation of $\Lam$ and $k \in Q_0$. Then we have the following complex of vector spaces
    \begin{equation}
        \label{eq:complex:vector:space:dual}
        0 \rightarrow M(k) \xrightarrow{M(\beta_k)} M_{\text{out}}(k) \xrightarrow{M(\gamma_k'')}
        M_{\text{rel}}''(k)  
    \end{equation}
    where
    \begin{align}
        M_{\text{out}}(k) &= \bigoplus_{\beta \in Q_1 : \; s(\beta)=k} M_{\beta}  &\text{with } M_{\beta} \coloneq M(t(\beta)), \label{eq:M:out}\\
        M_{\text{rel}}''(k) &= \prod_{\rho \in R : \; s(\rho)=k} M_{\rho}''  &\text{with } M_{\rho}'' \coloneq M(t(\rho)), \label{eq:M:rel:dash}\\
        M(\beta_k)(m) &=M(\beta)m, \label{eq:M:beta:k}\\
        M(\gamma_k'')_{\rho}(m_{\beta}) &= M(r_{\beta}(\rho))m_{\beta}. \label{eq:M:gamma:k:dash}
    \end{align}
    Note that $M(\gamma''_k) \circ M(\beta_k)=0$ since for $\rho \in R$ with $s(\rho)=k$ we have
    \begin{equation*}
        \sum_{\beta \in Q_1: \; s(\beta)=k} r_\beta(\rho) \cdot \beta = \rho.
    \end{equation*}
The complex (\ref{definition:complex:vector:spaces:dual}) is also called a \emph{standard complex} for $M$.
\end{definition}

The following observations allow us to obtain a minimal injective presentation dual to the minimal projective presentation. Recall that the algebra $\bLam = \Gam/\overline{\mathfrak{J}}$ is pseudocompact. Thus,
we have a duality
\begin{equation*}
    D_2 \colon {\rm PC}(\bLam^{\op}) \rightarrow \Dis(\bLam)
\end{equation*}
between the category of pseudocompact $\bLam^{\op}$-modules with continuous homomorphisms on one side and discrete 
$\bLam$-modules on the other side. Here $D_2(M) = {\rm hom}_K(M,K)$ consists of the \emph{continuous linear forms} on $M$. For more details and definitions see for example \cite[Section 2]{SimsonDaniel2001Ccpa}. 
Under this duality, indecomposable pseudocompact projective modules get send to indecomposable discrete-injective modules. 

Note that each finitely generated projective $\bLam$-module is pseudocompact.

On the other hand, the finitely cogenerated injective $\bLam$-modules 
are in general not discrete.

We know from Section~\ref{subsec:injectives} that
the indecomposable discrete-injective $\bLam$-modules are
 \[
 D_2(e_k\bLam) = \bigcup_{l \ge 0} D(e_k \Lam_l).
 \]
 where $D$ is the usual duality $\hom_K(-,K)$.
Note also that $D(D_2(e_k \bLam)) \cong e_k\bLam$.
This allows us to dualize Theorem~\ref{thm:main3}:

\begin{theorem}\label{thm:main4}
     Let $\Lam = \Gam/\mathfrak{J}$ and $\bLam \coloneq \Gam/\overline{\mathfrak{J}}$ as in Section~\ref{section:completed:path:algebras}. Then each finite-dimensional representation $M$ of $\bLam$ has a minimal discrete-injective presentation
     \begin{equation*}
        0 \rightarrow M \rightarrow \bigoplus_{k \in Q_0} D_2(e_k\bLam)^{\dim\ker(M(\beta_k))} \rightarrow \bigoplus_{k \in Q_0} D_2(e_k\bLam)^{\dim(\ker(M(\gamma_k''))/\im(M(\beta_k)))}.
    \end{equation*}
\end{theorem}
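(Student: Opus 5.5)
The plan is to deduce Theorem~\ref{thm:main4} from Theorem~\ref{thm:main3} applied to the opposite algebra, using the ordinary duality $D=\hom_K(-,K)$ on finite-dimensional modules together with the continuous duality $D_2$.

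\textbf{Step 1: pass to the opposite algebra.} Put $N\coloneq D(M)$, a finite-dimensional $\bLam^{\op}$-module. Now $\bLam^{\op}=\Gam^{\op}/\overline{\mathfrak{J}}^{\op}$, and $\Gam^{\op}=K\comp{Q^{\op}}$ is the completed path algebra of the opposite quiver. The ideal $\mathfrak{J}^{\op}$ is generated by the set $R^{\op}$ of reversed relations, and its closure is $\overline{\mathfrak{J}}^{\op}$. Hence Theorem~\ref{thm:main3} applies to $N$ over $\Gam^{\op}/\mathfrak{J}^{\op}$ and over $\bLam^{\op}$. It gives a minimal projective presentation
\[
\bigoplus_{k}(e_k\bLam)^{a_k}\xrightarrow{\ \varphi\ }\bigoplus_k(e_k\bLam)^{b_k}\to N\to 0 .
\]
Here $a_k=\dim\ker(N(\alpha_k^{\op}))/\im(N(\gamma_k'^{\op}))$ and $b_k=\dim\coker N(\alpha_k^{\op})$, computed for $Q^{\op}$ and $R^{\op}$.

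\textbf{Step 2: identify the standard complexes.} I check that the complex (\ref{eq:complex:vector:space}) for $N$ over $Q^{\op}$ is the $K$-dual of the complex (\ref{eq:complex:vector:space:dual}) for $M$ over $Q$. The arrows of $Q^{\op}$ ending at $k$ are the reversed arrows $\beta^{\op}$ with $s(\beta)=k$, so $N_{\rm in}(k)=\bigoplus_{s(\beta)=k}D M(t(\beta))=D(M_{\rm out}(k))$ and $N(\alpha_k^{\op})=D(M(\beta_k))$. Reversing a path exchanges its first and last arrows, so $l_{\beta^{\op}}(\rho^{\op})=r_\beta(\rho)^{\op}$. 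This gives $N(\gamma_k'^{\op})=D(M(\gamma_k''))$.

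One point needs care: $M'_{\rm rel}$ is defined as a direct sum, while $M''_{\rm rel}$ is a product. By Remark~\ref{remark:dim:min:proj}, only finitely many relations contribute modulo $\mathfrak{m}^{n(M)+1}$. Alternatively, $\ker$ of a map into a product is determined by its components, so one may restrict to a finite subset of $R$ without changing $\ker M(\gamma''_k)$. Dualizing the finite-dimensional pieces is then legitimate. Since $D$ is exact on finite-dimensional spaces, it follows that $b_k=\dim\ker M(\beta_k)=h_k^{(0)}(M)$ and $a_k=\dim\ker M(\gamma''_k)/\im M(\beta_k)=h_k^{(1)}(M)$.

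\textbf{Step 3: apply $D_2$.} The presentation from Step 1 lies in ${\rm PC}(\bLam^{\op})$: finitely generated projectives are pseudocompact, and $\varphi$ is automatically continuous because it is a homomorphism between finitely generated pseudocompact modules. Also, $N$ is finite-dimensional and discrete, with $D_2(N)=D(N)=M$. The functor $D_2$ is an exact contravariant duality and sends $e_k\bLam$ to $D_2(e_k\bLam)$. Applying it yields the exact sequence
\[
0\to M\to\bigoplus_k D_2(e_k\bLam)^{b_k}\to\bigoplus_k D_2(e_k\bLam)^{a_k}
\]
in $\Dis(\bLam)$.

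\textbf{Step 4: minimality.} This is the step I expect to be the main obstacle. Minimality on the projective side means that $\bigoplus(e_k\bLam)^{b_k}\to N$ is a projective cover and that $\varphi$ is a projective cover of its image. Under the duality $D_2$, these become the statements that $M\to\bigoplus D_2(e_k\bLam)^{b_k}$ is an injective envelope in $\Dis(\bLam)$ and that the cokernel embeds essentially into the next term. I would argue this via socles: projective covers are detected by $\mathfrak{m}$-tops, and $D_2$ interchanges tops of pseudocompact modules with socles of discrete modules. An essential monomorphism in $\Dis(\bLam)$ is one inducing an isomorphism on socles, because every discrete module is locally nilpotent by Proposition~\ref{prop:locnilpotent}, so every nonzero submodule meets the socle. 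Since the number of copies of each $D_2(e_k\bLam)$ in a discrete-injective is determined by the socle, this fixes the multiplicities, and minimality transfers.
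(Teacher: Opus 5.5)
Your proposal is correct and is the same argument as the paper's: dualize $M$ to a finite-dimensional $\bLam^{\op}$-module, apply Theorem~\ref{thm:main3} for $Q^{\op}$ with the reversed relations, and transport the resulting minimal projective presentation through the duality $D_2$. Your Steps~2--4 add details that the paper's three-line proof leaves implicit: matching the standard complex of $D(M)$ over $Q^{\op}$ with the $K$-dual of (\ref{eq:complex:vector:space:dual}), including the direct-sum versus product issue; the continuity of the maps; and the transfer of minimality via tops and socles.
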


\begin{proof}
    Consider $M'\coloneq \hom_K(M,K)$ as a finite-dimensional representation of $\bLam^{\rm op}$. By Theorem~\ref{thm:main3}, it has a finite minimal projective presentation. If we apply to it the duality $D_2$ between pseudocompact $\bLam^{\rm op}$-modules and discrete 
    $\bLam$-modules we obtain the desired minimal discrete-injective presentation of $M$.
\end{proof}

Note that Theorem~\ref{thm:main4} generalizes \cite[Remark~10.8]{DWZ10}.

\subsection{Standard resolutions of simple modules and standard complexes}
The aim of this section is to give a conceptual explanation of the standard complexes
(\ref{eq:complex:vector:space}) and (\ref{eq:complex:vector:space:dual}) for $M \in {\rm mod}(\Lam)$.
Namely, they arise naturally from the standard discrete-injective resolution 
(resp. the standard projective resolution) of the simple module $S_k$.

    For the simple module $S_k = \Lam e_k / \mathfrak{m} e_k$ with $k \in Q_0$ the sequence 
    (\ref{eq:standard:presentation:module}) specializes to 
    \begin{equation}
        \label{eq:standard:presentation:simple}
        \bigoplus_{\rho \in R: \; s(\rho)=k} \Lam e_{t(\rho)} \otimes K\rho \otimes Ke_k \xrightarrow{c_k} \bigoplus_{\alpha \in Q_1: \; s(\alpha)=k} \Lam e_{t(\alpha)} \otimes K\alpha \otimes Ke_k \xrightarrow{d_k} \Lam e_k \xrightarrow{p_k} S_k \rightarrow 0,
    \end{equation}
    where $p_k$ is the canonical projection,
    \begin{align*}
        d_k(p \otimes \alpha \otimes e_k) &= p \cdot \alpha \cdot e_k, \\
        c_k(p \otimes \rho \otimes e_k)_{\alpha} &= p \cdot r_\alpha(\rho) \otimes \alpha \otimes e_k.
    \end{align*}
    We can recover the standard complex (\ref{eq:complex:vector:space:dual}) for $M$ if we apply the functor $\hom_{\Lam}(-,M)$ to (\ref{eq:standard:presentation:simple}). 

Assume now that the set $R$ of relations is finite.
Then (\ref{eq:standard:presentation:simple}) descends to the beginning of a projective
resolution
\begin{equation}
        \label{eq:standard:presentation:simplebar}
        \bigoplus_{\rho \in R: \; s(\rho)=k} \bLam e_{t(\rho)} \otimes K\rho \otimes Ke_k \xrightarrow{c_k} \bigoplus_{\alpha \in Q_1: \; s(\alpha)=k} \bLam e_{t(\alpha)} \otimes K\alpha \otimes Ke_k \xrightarrow{d_k} \bLam e_k \xrightarrow{p_k} S_k \rightarrow 0
    \end{equation}
of $S_k$ in $\Mod(\bLam)$.
Since the modules $\bLam e_i$ are pseudocompact, this is also the beginning of a projective resolution
in $PC(\bLam)$.
By duality we obtain the beginning of an injective resolution
\begin{equation}
    \label{eq:standard:copresentation:simple}
    0 \rightarrow S_k \xrightarrow{i_k} D_2(e_k\bLam) \xrightarrow{D_2(d_k')} \bigoplus_{\alpha \in Q_1: \; t(\alpha)=k} D_2(e_{s(\alpha)}\bLam) \xrightarrow{D_2(c_k')} \bigoplus_{\rho \in R: \; t(\rho)=k} D_2(e_{s(\rho)}\bLam)
\end{equation}
of $S_k$ in ${\rm Dis}(\bLam)$.
Here $d_k'$ resp. $c_k'$ are the maps corresponding to the beginning of the projective resolution of the simple $\bLam^{\text{op}}$-module $S_k'$.

If we apply the functor $D\hom(M,-)$ to (\ref{eq:standard:copresentation:simple}), we obtain the standard complex (\ref{eq:complex:vector:space}) for $M$. Indeed, for $M \in {\rm mod}(\Lam)$, one has natural identifications
        \begin{equation*}
            D\hom_{\bLam}(M,D_2(e_i\bLam)) \cong e_iM \cong M(i),
        \end{equation*}
        which proves that the vector spaces coincide. Spelling out what dualizing and applying the functor $D\hom(M,-)$ do on the maps $d_k'$ and $c_k'$ yields the result. 

        \section{Applications to Jacobian algebras defined by hyperpotentials}

\subsection{Minimal presentations for Jacobian algebras} 
\label{section:jacobian:algebras}
As before, let $Q$ be a quiver.
A \emph{potential} $W$ for $Q$ is a (possibly infinite) linear combination of cycles in $Q$.
Such a potential is \emph{reduced} if all of these cycles have length at least $3$.

Assume now that $W$ is reduced.
In this situation, we set \[\rho_\alpha \coloneq \partial_\alpha(W)\] for each $\alpha \in Q_1$, where $\partial_\alpha$ denotes the \emph{cyclic derivative} defined by
\begin{equation*}
    \partial_\alpha(\alpha_1 \cdots \alpha_m) \coloneq \sum_{\substack{1 \leq p \leq m\\
                  \alpha_p = \alpha}}
        \alpha_{p+1}\cdots \alpha_m\alpha_1 \cdots \alpha_{p-1}
\end{equation*}
for each cycle $\alpha_1 \cdots \alpha_m$ appearing $W$.
(In particular, we have $m \ge 3$.)
We refer to
\cite[Section~4]{DWZ10} for more details on Jacobian algebras. 
With this notation, let
\[
R(W) \coloneq \Set{\rho_\alpha \mid \alpha \in Q_1},
\]
and let $I(W)$ be the ideal of 
$\Gam = K\comp{Q}$ which is generated by the elements of $R(W)$. Let $\Lam \coloneq \Gam/I(W)$.
Then 
\[
\Jacobi_K(Q,W) \coloneq
\bLam = \Gam/\overline{I(W)}
\] is the \emph{Jacobian algebra} in the sense of 
\cite[Section~4]{DWZ10}. Note that there are examples, where $\bLam$ is finite-dimensional but
$\Lam$ is not.

Let $\alpha,\beta\in Q_1$ with $s(\beta)=t(\alpha)$, then it follows by definition that we have
\begin{equation}\label{eq:partialbetaalpha}
    l_\alpha(\rho_\beta) = \partial_{\beta\alpha}(W),
\end{equation}
where
\begin{equation*}
    \partial_{\beta\alpha}(\alpha_1 \cdots \alpha_m) \coloneq \sum_{\substack{1 \leq p \leq m\\
                  \alpha_{p-1} = \beta \\
                  \alpha_p = \alpha}}
        \alpha_{p+1}\cdots \alpha_ma_1 \cdots \alpha_{p-2}
\end{equation*}
for each cycle $\alpha_1 \cdots \alpha_m$ appearing in $W$.
The index $p$ in the sum above has to be taken modulo $m$.

This equality (\ref{eq:partialbetaalpha}) implies that for a finite-dimensional representation $M$ of a Jacobian algebra $\bLam = \Jacobi_K(Q,W)$ and $k \in Q_0$ the linear map $M(\gamma_k')$ from (\ref{eq:M:gamma:k}) specializes to the map $\gamma_k=(H_k(W))_M$ from \cite[eq. 4.7]{DWZ10}. 

We look now at a slightly generalized version of Jacobian algebras defined by Ladkani \cite{Lad14}.
As before, let $\Gam = K\comp{Q}$, and let $\mathfrak{m}$ be the arrow ideal of $\Gam$.
For $x,y \in \Gam$, let $[x,y] \coloneq xy - xy$.
Following Ladkani \cite{Lad14}, a \emph{hyperpotential} for $Q$ is a collection $\tilde{W} = (\rho_\alpha)_{\alpha \in Q_1}$
of elements in $\Gam = K\comp{Q}$ such that the following hold:
\begin{enumerate}

\item 
For each $\alpha \in Q_1$, we have
$\rho_\alpha \in e_{s(\alpha)} \Gam e_{t(\alpha)}$;

\item 
We have
\[
\sum_{\alpha \in Q_1} [\alpha,\rho_\alpha] = 0.
\]
\end{enumerate}
Such a hyperpotential $\tilde{W}$ is \emph{reduced} if
$\rho_\alpha \in e_{s(\alpha)}\mathfrak{m}^2 e_{t(\alpha)}$ for each
$\alpha \in Q_1$.

The \emph{Jacobian algebra} associated to $Q$ and a reduced hyperpotential $\tilde{W}$ is defined as
\[
\Jacobi_K(Q,\tilde{W}) \coloneq \Gam/\overline{I(\tilde{W})},
\] 
where $I(\tilde{W})$ is the ideal generated be the elements of
\[
R(\tilde{W}) \coloneq \{ \rho_\alpha \mid \alpha \in Q_1 \}.
\]

Note that we only work with reduced potentials and reduced hyperpotentials, because we want that the defining ideal $I(W)$ of $\Gam$ is contained in $\mathfrak{m}^2$. This ensures that the vertices $k$ of $Q$ correspond to the simple modules $S_k$ and the number of arrows $i \to j$ equals 
$\dim \Ext_{\bLam}^1(S_i,S_j)$, where $\bLam = \Jacobi_K(Q,W)$ or 
$\bLam = \Jacobi_K(Q,\tilde{W})$, respectively.

\begin{lemma}[{Ladkani \cite{Lad14}}]
Let $\tilde{W} = (\rho_\alpha)_{\alpha \in Q_1}$ be a reduced hyperpotential for $Q$.
If ${\rm char}(K) = 0$,
then there exists a potential $W$ for $Q$ such that
$\Jacobi_K(Q,\tilde{W}) = \Jacobi_K(Q,W)$.
\end{lemma}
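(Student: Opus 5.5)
The plan is to construct $W$ explicitly, one homogeneous degree at a time. The closedness condition $\sum_\alpha[\alpha,\rho_\alpha]=0$ then says precisely that $\rho_\alpha=\partial_\alpha(W)$. This is a noncommutative Poincaré lemma, and dividing by degrees is where ${\rm char}(K)=0$ enters. (I read the commutator as $[x,y]=xy-yx$; the paper's ``$xy-xy$'' is a typo.)

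\textbf{Step 1: reduce to one degree.} Decompose each $\rho_\alpha=\sum_{d\ge 2}\rho_\alpha^{(d)}$ into its homogeneous components, where $\rho_\alpha^{(d)}$ is a finite linear combination of paths of length $d$ from $t(\alpha)$ to $s(\alpha)$. Since $[\alpha,-]$ raises the degree by exactly one, the condition $\sum_\alpha[\alpha,\rho_\alpha]=0$ holds in each degree separately:
\[
\sum_\alpha \alpha\rho_\alpha^{(d)}=\sum_\alpha\rho_\alpha^{(d)}\alpha \quad\text{for every } d .
\]
Now put
\[
W_{d+1}\coloneq \frac{1}{d+1}\sum_{\alpha\in Q_1}\alpha\rho^{(d)}_\alpha \qquad\text{and}\qquad W\coloneq\sum_{d\ge2}W_{d+1}.
\]
Each $\alpha\rho_\alpha^{(d)}$ is a linear combination of cycles of length $d+1\ge 3$, so $W$ is a reduced potential. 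The sum converges $\mathfrak m$-adically, and $\partial_\beta$ is continuous and homogeneous of degree $-1$. It therefore suffices to prove, for each $d$ and each $\beta\in Q_1$,
\[
\partial_\beta\Bigl(\sum_\alpha\alpha\rho^{(d)}_\alpha\Bigr)=(d+1)\,\rho_\beta^{(d)} .
\]

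\textbf{Step 2: the combinatorial identity (main obstacle).} Write $c(w)$ for the coefficient of a path $w$. For a cycle $w=\alpha_1\cdots\alpha_{d+1}$, let $f(w)\coloneq c_{\rho_{\alpha_1}}(\alpha_2\cdots\alpha_{d+1})$ be the coefficient of $w$ in $\sum_\alpha\alpha\rho_\alpha^{(d)}$. The degree-$d$ closedness condition says this equals the coefficient of $w$ in $\sum_\alpha\rho^{(d)}_\alpha\alpha$, that is, $c_{\rho_{\alpha_{d+1}}}(\alpha_1\cdots\alpha_d)$. This is exactly $f$ of the rotated cycle $\alpha_{d+1}\alpha_1\cdots\alpha_d$. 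Iterating, $f$ is invariant under all cyclic rotations.

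Next compute $\partial_\beta\bigl(\sum_w f(w)\,w\bigr)$. Each occurrence $\alpha_p=\beta$ in $w$ contributes $f(w)$ times the rotated path obtained by deleting that $\beta$. So for a path $u$ of length $d$ with $\beta u$ a cycle, the coefficient of $u$ in the result is
\[
\sum_{(w,p)} f(w),
\]
summed over pairs consisting of a cycle $w$ and a position $p$ of an occurrence of $\beta$ in $w$ whose deletion-and-rotation yields $u$. These pairs correspond exactly to the $d+1$ rotations of the cycle $\beta u$, the correspondence being to rotate $\beta u$ so that the deleted $\beta$ sits at position $p$. Here rotations are counted with multiplicity, so the count is right even when $\beta u$ is periodic. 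By rotation invariance each of the $d+1$ terms equals $f(\beta u)=c_{\rho_\beta}(u)$. Hence the coefficient is $(d+1)\,c_{\rho_\beta}(u)$, which is the identity of Step 1.

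The delicate points are the bookkeeping of the convention for $\partial_\alpha$ in the paper (which rotation order is used) and this counting with multiplicity for periodic cycles. I would first check the counting on the cycle $(\beta\alpha)^n$ of Example~\ref{example:cyc:differential}.

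\textbf{Step 3: conclude.} Steps 1 and 2 give $\rho_\beta=\partial_\beta(W)$ exactly, for every $\beta\in Q_1$. Hence $R(W)=R(\tilde W)$, so $I(W)=I(\tilde W)$. Taking $\mathfrak m$-adic closures yields $\Jacobi_K(Q,\tilde W)=\Jacobi_K(Q,W)$. Division by $d+1$ for all $d\ge 2$ is possible precisely because ${\rm char}(K)=0$.
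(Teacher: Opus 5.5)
The paper does not prove this lemma. It is quoted from Ladkani without argument, so there is no internal proof to compare against. Your argument is correct and complete as it stands.

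\textbf{Checking the steps.} The degreewise splitting of $\sum_\alpha[\alpha,\rho_\alpha]=0$ is legitimate because $\Gam=\prod_m KQ_m$. The function $f$ is indeed invariant under $\alpha_1\cdots\alpha_{d+1}\mapsto\alpha_{d+1}\alpha_1\cdots\alpha_d$. Your counting is also right. The pairs $(w,p)$ contributing to the coefficient of $u$ in $\partial_\beta\bigl(\sum_\alpha\alpha\rho^{(d)}_\alpha\bigr)$ are indexed by $p\in\{1,\dots,d+1\}$, with $w=u_2\beta u_1$ where $u=u_1u_2$ and $|u_1|=d+1-p$. So periodicity of $\beta u$ causes no trouble. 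Paths $u$ for which $\beta u$ is not a cycle contribute $0$ on both sides, since then no rotation of $\beta u$ is a cycle and $c_{\rho_\beta}(u)=0$.

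\textbf{Relation to the paper.} Your rotation-invariance step is exactly the coefficientwise content of Proposition~\ref{prop:Esymm1}. Indeed, $f(\alpha\beta q)$ is the coefficient of $q$ in $l_\beta(\rho_\alpha)$, and $f(\beta q\alpha)$ is the coefficient of $q$ in $r_\alpha(\rho_\beta)$. Hence the identity $l_\beta(\rho_\alpha)=r_\alpha(\rho_\beta)$ for all $\alpha,\beta$ is equivalent to one-step rotation invariance of $f$. That part holds in every characteristic. The characteristic enters only through the division by $d+1$.

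\textbf{What your approach buys.} Besides an explicit formula for $W$, it proves slightly more. It suffices that $d+1$ is invertible in $K$ for every degree $d$ in which some $\rho_\alpha$ has a nonzero homogeneous component. This matches the paper's two counterexamples exactly. The relation $\rho_\alpha=\alpha^{p-1}$ in characteristic $p$ has $d+1=p$. The length-$5$ relations in characteristic $2$ have $d+1=6$.

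Your reading of the commutator as $xy-yx$ is the intended one. The proof of Proposition~\ref{prop:Esymm1} rewrites the hyperpotential condition as $\sum_\nu\nu\rho_\nu=\sum_\nu\rho_\nu\nu$, which confirms it.
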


\begin{example}
Assume that $p \coloneq {\rm char}(K) \ge 3$.
Let $\Lam = \bLam = K\comp{Q}/I$, where $Q$ is the quiver
\[
\xymatrix{
1 \ar@(ur,dr)[]^\alpha
}
\]
and $I = \gen{\alpha^{p-1}}$.
Then there is no potential $W$ such that
$\bLam = \Jacobi_K(Q,W)$.
Namely, for $W = \alpha^p$ we have $\partial_\alpha(W) = p\alpha^{p-1} = 0$.
However, for the reduced hyperpotential $\tilde{W} \coloneq (\rho_\alpha)$ with $\rho_\alpha = \alpha^{p-1}$, we get
$\bLam = \Jacobi_K(Q,\tilde{W})$.
\end{example}

\begin{example}
Assume that ${\rm char}(K) = 2$.
Let $\Lam = \bLam = K\comp{Q}/I$, where $Q$ is the quiver
\[
\xymatrix@-3ex{
&& 2\ar[dll]_\alpha
\\
1 \ar@/_2ex/[rrrr]_{\gamma} &&&& 3 \ar[ull]_\beta
}
\]
and $I = \gen{\text{all paths of length $5$ in Q}}$.
Then $\tilde{W} = (\rho_\alpha,\rho_\beta,\rho_\gamma)$
with $\rho_\alpha = \beta\gamma\alpha\beta\gamma$,
 $\rho_\beta = \gamma\alpha\beta\gamma\alpha$,
  $\rho_\gamma = \alpha\beta\gamma\alpha\beta$
is a reduced hyperpotential for $Q$ with $\bLam = \Jacobi_K(Q,\tilde{W})$.  
However, for $W = \alpha\beta\gamma\alpha\beta\gamma$ we get $\partial_\mu(W) = 2 \rho_\mu = 0$ for $\mu \in \{ \alpha,\beta,\gamma\}$ and therefore $\Jacobi_K(Q,W) = K\comp{Q}$.
It follows that $\bLam$ is not a Jacobian algebra in the sense of
Derksen--Weyman--Zelevinsky.
\end{example}

\begin{proposition}\label{prop:Esymm1}
Let $\tilde{W} = (\rho_\alpha)_{\alpha \in Q_1}$ be a reduced hyperpotential for $Q$, and let $\bLam = \Jacobi_K(Q,\tilde{W})$.
Then for $\alpha,\beta \in Q_1$ we have
\[
l_\beta(\rho_\alpha) = r_\alpha(\rho_\beta)
\]
\end{proposition}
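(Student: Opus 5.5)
The plan is to extract the identity directly from the defining relation $\sum_{\gamma \in Q_1} [\gamma,\rho_\gamma] = 0$ by applying a suitable ``two-sided truncation'' operator. Here the bracket is read as the commutator $[x,y] = xy - yx$; the formula $xy - xy$ in the paper is evidently a typo. Fix $\alpha,\beta \in Q_1$. Define the linear map $\varphi_{\beta,\alpha}\colon \Gam \to \Gam$ on paths by
\[
\varphi_{\beta,\alpha}(\alpha_l \cdots \alpha_1) \coloneq \begin{cases} \alpha_{l-1}\cdots\alpha_2 & \text{if } l \ge 2,\ \alpha_l = \beta,\ \alpha_1 = \alpha,\\ 0 & \text{else,}\end{cases}
\]
where $\alpha_{l-1}\cdots\alpha_2$ is read as the idempotent $e_{s(\beta)} = e_{t(\alpha)}$ when $l=2$. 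Extend it coefficientwise to (possibly infinite) linear combinations of paths. Since $\varphi_{\beta,\alpha}$ acts coefficientwise and maps paths of length $l$ to paths of length $l-2$, it is well defined and continuous on $\Gam = \prod_m KQ_m$. On elements of $\mathfrak{m}^2$ one checks directly on paths that $\varphi_{\beta,\alpha} = l_\beta \circ r_\alpha = r_\alpha \circ l_\beta$.

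The key observations, both checked on paths and then extended by linearity and continuity, are the following. For any $\gamma \in Q_1$ and $x \in \mathfrak{m}$ we have
\[
\varphi_{\beta,\alpha}(\gamma x) = \delta_{\gamma,\beta}\, r_\alpha(x), \qquad \varphi_{\beta,\alpha}(x\gamma) = \delta_{\gamma,\alpha}\, l_\beta(x).
\]
For the first identity: in a path $\gamma \cdot p$ with $p$ of length $\ge 1$, the leftmost letter is $\gamma$ and the rightmost letter is that of $p$, and these occupy distinct positions. So $\varphi_{\beta,\alpha}(\gamma p)$ is nonzero only if $\gamma = \beta$, and then it equals $r_\alpha(p)$. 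The second identity is symmetric. Because $\tilde W$ is reduced, each $\rho_\gamma \in \mathfrak{m}^2 \subseteq \mathfrak{m}$, so both identities apply to $x = \rho_\gamma$. The potential degenerate overlap, where the leftmost and rightmost letters coincide, only occurs for paths of length $1$, and these do not appear.

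Now apply $\varphi_{\beta,\alpha}$ to $0 = \sum_{\gamma} (\gamma\rho_\gamma - \rho_\gamma\gamma)$. Continuity and linearity allow termwise evaluation; the sum is in any case finite since $Q_1$ is finite. This gives
\[
0 = \sum_\gamma \delta_{\gamma,\beta}\, r_\alpha(\rho_\gamma) - \sum_\gamma \delta_{\gamma,\alpha}\, l_\beta(\rho_\gamma) = r_\alpha(\rho_\beta) - l_\beta(\rho_\alpha),
\]
which is the claim.

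Finally, I would remark on the case $s(\beta) \neq t(\alpha)$. Then $\varphi_{\beta,\alpha}$ is identically zero, since no path has the form $\beta\cdots\alpha$. Likewise $r_\alpha(\rho_\beta) = 0 = l_\beta(\rho_\alpha)$ by the idempotent conditions $\rho_\gamma \in e_{s(\gamma)}\Gam e_{t(\gamma)}$, so the identity holds trivially in that case. The computation above covers it uniformly anyway. The only step needing care is the bookkeeping of the first and last letters, which is exactly where reducedness of $\tilde W$ is used; I expect no genuine obstacle.
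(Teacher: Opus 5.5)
Your argument is correct and is essentially the paper's proof. The paper applies the projection $\pi_{\beta,\alpha}\colon\Gamma\to\beta\Gamma\alpha$ to $\sum_\nu\nu\rho_\nu=\sum_\nu\rho_\nu\nu$ and then cancels the outer letters $\beta$ and $\alpha$; your $\varphi_{\beta,\alpha}$ simply builds that cancellation into the operator.

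Two corrections are needed.

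First, your product identities hold only for $x\in e_{s(\gamma)}\mathfrak{m}$ (resp.\ $x\in\mathfrak{m}e_{t(\gamma)}$), not for all $x\in\mathfrak{m}$. For instance, $\varphi_{\beta,\alpha}(\beta\cdot\alpha)=0\neq e_{t(\alpha)}=r_\alpha(\alpha)$ when $s(\beta)\neq t(\alpha)$. This does no harm, since $\rho_\gamma$ satisfies the restriction by condition~(i).

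Second, delete your closing paragraph, because it is false. The degenerate case is $t(\beta)\neq s(\alpha)$, not $s(\beta)\neq t(\alpha)$. Take the $3$-cycle $1\xrightarrow{a}2\xrightarrow{b}3\xrightarrow{c}1$ with $\rho_a=cb$, $\rho_b=ac$, $\rho_c=ba$, and set $\beta=c$, $\alpha=a$. Then $s(c)\neq t(a)$, yet $l_c(\rho_a)=b=r_a(\rho_c)\neq 0$ and $\varphi_{c,a}(cba)=b\neq 0$. Your main computation is uniform in $\alpha,\beta$, so nothing else changes.
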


\begin{proof}
We need the canonical $K$-linear projection
\begin{align*}
\pi_{\beta,\alpha}\colon \Gam &\to \beta \Gam \alpha
\\
\sum_{p} \lambda_p p &\mapsto \sum_{p = \beta q \alpha} \lambda_p p,
\end{align*}
where the left hand sum runs over all paths $p$ in $Q$ and $\lambda_p \in K$.

By assumption, we have
\begin{equation*}
\sum_{\nu \in Q_1} [\nu,\rho_\nu] = 0 
\end{equation*}
which is equivalent to
\begin{equation*}
\sum_{\nu \in Q_1} \nu\rho_\nu =
\sum_{\nu \in Q_1} \rho_\nu\nu.
\end{equation*}
We obtain
\begin{align*}
\beta l_\beta(\rho_\alpha)\alpha = \pi_{\beta,\alpha}(\rho_\alpha\alpha) 
&= \pi_{\beta,\alpha}\left(\sum_{\nu \in Q_1} \rho_\nu\nu\right)
\\
&= \pi_{\beta,\alpha}\left(\sum_{\nu \in Q_1} \nu\rho_\nu\right)
= \pi_{\beta,\alpha}(\beta\rho_\beta) = \beta r_\alpha(\rho_\beta)\alpha.
\end{align*}
Since $\Gam$ has no zero-divisors, the equality 
$\beta l_\beta(\rho_\alpha)\alpha = \beta r_\alpha(\rho_\beta)\alpha$
implies that $l_\beta(\rho_\alpha) = r_\alpha(\rho_\beta)$.
\end{proof}

Suppose that $0 \not= \rho \coloneq l_\beta(\rho_\alpha) = r_\alpha(\rho_\beta)$.
Then the situation
can be vizualized as 
\[
\xymatrix@-3ex{
&& k\ar[dll]_\alpha\\
i \ar@{-->}@/_3ex/[rrrr]_{\rho} &&&& j \ar[ull]_\beta
}
\]
where $\rho_\alpha = \beta \rho$ and $\rho_\beta = \rho \alpha$.

\begin{corollary}\label{cor:Esymm2}
Let $\tilde{W} = (\rho_\alpha)_{\alpha \in Q_1}$ be a reduced hyperpotential for $Q$, and let 
$\Lam = \Gam/I(\tilde{W})$.
Then for each $M \in {\rm mod}(\Lam)$ and $k \in Q_0$ we have
\[
M(\gamma_k) \coloneq M(\gamma_k') = M(\gamma_k'').
\]
In particular, we get a complex
\[
0 \to M(k) \xrightarrow{M(\beta_k)} M_{\rm out}(k) \xrightarrow{M(\gamma_k)} 
M_{\rm in}(k) \xrightarrow{M(\alpha_k)} M(k) \to 0.
\]
\end{corollary}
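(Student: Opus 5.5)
The plan is to compare the two maps entry by entry, after matching up the index sets. For a hyperpotential we have $R = R(\tilde W) = \{\rho_\alpha \mid \alpha \in Q_1\}$ with $\rho_\alpha \in e_{s(\alpha)}\Gam e_{t(\alpha)}$. So $t(\rho_\alpha) = s(\alpha)$ and $s(\rho_\alpha) = t(\alpha)$.

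First I identify the spaces. By (\ref{eq:M:rel}),
\[
M'_{\rm rel}(k) = \bigoplus_{\alpha:\, s(\alpha)=k} M(t(\alpha)) = M_{\rm out}(k).
\]
By (\ref{eq:M:rel:dash}),
\[
M''_{\rm rel}(k) = \prod_{\alpha:\, t(\alpha)=k} M(s(\alpha)) = M_{\rm in}(k).
\]
Both index sets are finite since $Q$ is finite, so the product is a direct sum. Under these identifications the map $M(\gamma_k')$ goes from $M_{\rm out}(k)$ to $M_{\rm in}(k)$. So does $M(\gamma_k'')$, since its source is $M_{\rm out}(k)$ and its target is $M''_{\rm rel}(k) = M_{\rm in}(k)$.

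Next I compare matrix entries. Fix $\beta$ with $s(\beta)=k$, which indexes a component of $M_{\rm out}(k)$. Fix $\alpha$ with $t(\alpha)=k$, which indexes a component of $M_{\rm in}(k)$. The relation indexing the source summand of $M(\gamma_k')$ is $\rho_\beta$, since $t(\rho_\beta) = s(\beta) = k$.
\begin{itemize}
\item By (\ref{eq:M:gamma:k}), the $(\alpha,\beta)$-entry of $M(\gamma_k')$ is $M(l_\alpha(\rho_\beta))$.
\item By (\ref{eq:M:gamma:k:dash}), the relation indexing the target summand of $M(\gamma_k'')$ is $\rho_\alpha$, since $s(\rho_\alpha) = t(\alpha) = k$. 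The $(\alpha,\beta)$-entry is therefore $M(r_\beta(\rho_\alpha))$.
\end{itemize}
Proposition~\ref{prop:Esymm1}, applied with the roles of $\alpha,\beta$ swapped, gives $l_\alpha(\rho_\beta) = r_\beta(\rho_\alpha)$ in $\Gam$. Since $M$ is a $\Gam$-module via $\Gam \to \Lam$, the entries agree, so $M(\gamma_k') = M(\gamma_k'')$.

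Finally, the "in particular" statement follows by splicing the two standard complexes. Definition~\ref{definition:complex:vector:spaces:dual} gives $M(\gamma_k'') \circ M(\beta_k) = 0$. Definition~\ref{definition:complex:vector:spaces} gives $M(\alpha_k) \circ M(\gamma_k') = 0$. The equality just proved joins these into the displayed four-term complex.

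The only real obstacle is bookkeeping: checking that the source/target conventions for $\rho_\alpha$ make the summand indexed by $\beta$ in $M'_{\rm rel}(k)$ correspond to the summand $M_\beta$ of $M_{\rm out}(k)$, and likewise on the $M_{\rm in}$ side. A transposition error there would produce the wrong entry comparison, so I will verify the vertex matching explicitly.
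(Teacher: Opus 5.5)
Your proof is correct and takes the same route as the paper, whose proof says only that the corollary follows directly from Proposition~\ref{prop:Esymm1} and the definitions of $M(\gamma_k')$ and $M(\gamma_k'')$. Your identifications $M'_{\rm rel}(k) = M_{\rm out}(k)$ and $M''_{\rm rel}(k) = M_{\rm in}(k)$ (indexing $R(\tilde W)$ by arrows, as the paper also does) and your entrywise comparison $l_\alpha(\rho_\beta) = r_\beta(\rho_\alpha)$ supply exactly the bookkeeping the paper leaves implicit.
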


\begin{proof}
This follows directly from Proposition~\ref{prop:Esymm1} and the definitions of
the maps $M(\gamma_k')$ and $M(\gamma_k'')$.
\end{proof}

Theorems~\ref{thm:main3} and \ref{thm:main4}
specialize to the following version of \cite[Proposition 10.4]{DWZ10} and its dual \cite[Remark~10.8]{DWZ10} with weaker hypotheses.
More precisely, we do 
not need Derksen--Weyman--Zelevinsky's assumptions that the quiver $Q$ is $2$-acyclic and 
that $\characteristic(K) = 0$.

\begin{theorem}\label{thm:main5}
    Let $Q$ be a quiver, and let $\tilde{W}$ be a reduced hyperpotential for $Q$. Let $M$ be a finite-dimensional representation of the Jacobian algebra $\bLam = \Jacobi_K(Q,\tilde{W})$. 
    Then $M$ has a minimal projective presentation
    \begin{equation*}
        \bigoplus_{k \in Q_0} (\bLam e_k)^{\dim(\ker(M(\alpha_k))/\im(M(\gamma_k')))} \rightarrow \bigoplus_{k \in Q_0} (\bLam e_k)^{\dim\coker(M(\alpha_k))} \rightarrow M \rightarrow 0,
    \end{equation*}
    in ${\rm PC}(\bLam)$
    and a minimal injective presentation
\begin{equation*}
0\rightarrow M\rightarrow \bigoplus_{k\in Q_0} D_2(e_k \bLam)^{\dim \ker(M(\beta_k))}
\rightarrow  \bigoplus_{k\in Q_0} D_2(e_k \bLam)^{\dim(\ker(M(\gamma_k''))/\im(M(\beta_k)))}
\end{equation*}
in ${\rm Dis}(\bLam)$,
where the map $M(\gamma_k) \coloneq M(\gamma_k') = M(\gamma_k'')$ is defined in terms of the set of relations $R = R(\tilde{W})$.
\end{theorem}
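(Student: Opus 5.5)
The plan is to deduce Theorem~\ref{thm:main5} directly from Theorems~\ref{thm:main3} and \ref{thm:main4}. We apply them to $\Lam = \Gam/I(\tilde W)$ and $\bLam = \Gam/\overline{I(\tilde W)} = \Jacobi_K(Q,\tilde W)$, with $R = R(\tilde W)$. The only hypothesis to check is $\mathfrak{J} = I(\tilde W)\subseteq \mathfrak{m}^2$. This holds because $\tilde W$ is reduced, so each generator $\rho_\alpha$ lies in $e_{s(\alpha)}\mathfrak{m}^2 e_{t(\alpha)}$.

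First, a finite-dimensional representation $M$ of $\bLam$ is also one of $\Lam$, via the surjection $\Lam\to\bLam$. Conversely, every finite-dimensional $\Lam$-module is nilpotent and descends to $\bLam$, so ${\rm mod}(\Lam)={\rm mod}(\bLam)$. By Remark~\ref{remark:dim:min:proj}, in particular (\ref{eq:gammatruncation}), the image of $M(\gamma'_k)$ is the same whether it is computed from $R$ or from a generating set of the closure. Hence the numbers $\dim(\ker M(\alpha_k)/\im M(\gamma_k'))$ and $\dim\coker M(\alpha_k)$ are intrinsic to $M$ as a $\bLam$-module. The second part of Theorem~\ref{thm:main3} then gives the minimal projective presentation with terms $\bLam e_k$. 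Since finitely generated projective $\bLam$-modules are pseudocompact, and $\bLam$-linear maps between them are automatically continuous (for example because they are given by right multiplication by elements of $\bLam$), this presentation lives in ${\rm PC}(\bLam)$. Minimality there follows from minimality in $\Mod(\bLam)$, because the radical condition and the projective cover property are the same statements in both categories.

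Second, Theorem~\ref{thm:main4}, applied to the same $\Lam$, $\bLam$ and $R$, gives the minimal discrete-injective presentation with exponents $\dim\ker M(\beta_k)$ and $\dim(\ker M(\gamma_k'')/\im M(\beta_k))$.

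Finally, Corollary~\ref{cor:Esymm2}, which rests on the symmetry $l_\beta(\rho_\alpha)=r_\alpha(\rho_\beta)$ of Proposition~\ref{prop:Esymm1}, identifies $M(\gamma_k')$ with $M(\gamma_k'')$. The index sets match: for relations $\rho_\nu$ with $t(\rho_\nu)=k$ we have $M'_{\rm rel}(k) = M_{\rm out}(k)$, since $\rho_\nu\in e_{s(\nu)}\Gam e_{t(\nu)}$ forces $s(\nu)=k$. Dually, $M''_{\rm rel}(k)=M_{\rm in}(k)$. This justifies writing the single map $M(\gamma_k)$ in the statement.

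The main (mild) obstacle is bookkeeping rather than mathematics. One must check that the indexing of relations by arrows turns $M'_{\rm rel}(k)$ and $M''_{\rm rel}(k)$ into $M_{\rm out}(k)$ and $M_{\rm in}(k)$ with compatible components, so that Corollary~\ref{cor:Esymm2} applies verbatim. One must also check that the passage from $\Lam$ to $\bLam$ loses nothing, which is exactly the content of (\ref{eq:gammatruncation}).
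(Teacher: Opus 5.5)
Your proposal is correct and follows the paper's route. The paper obtains Theorem~\ref{thm:main5} by specializing Theorems~\ref{thm:main3} and~\ref{thm:main4} to $R = R(\tilde{W})$, where $I(\tilde{W})\subseteq\mathfrak{m}^2$ by reducedness, and by using Corollary~\ref{cor:Esymm2} to identify $M(\gamma_k')$ with $M(\gamma_k'')$; your extra checks (the identifications $M'_{\rm rel}(k)=M_{\rm out}(k)$ and $M''_{\rm rel}(k)=M_{\rm in}(k)$, and that the presentation lives in ${\rm PC}(\bLam)$) only make explicit what the paper leaves implicit.
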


\subsection{$E$-symmetry for Jacobian algebras}
For $a = (a_k)_k$ and $b = (b_k)_k$ in $\Z^{Q_0}$ let
\[
\gen{a,b} \coloneq \sum_{k \in Q_0} a_kb_k.
\]

Let $\Lam = \Gam/I(\tilde{W})$ and $\bLam = \Jacobi_K(Q,\tilde{W})$ be a Jacobian algebra as above.
For $M \in {\rm mod}(\Lam)$ let 
$\Dim(M) \coloneq ([M:S_k])_{k \in Q_0}$ be the
\emph{dimension vector} of $M$, where $[M:S_k]$ denotes the Jordan-H\"older multiplicity of the
simple module $S_k$ for $M$.
The \emph{$E$-invariants} of $M$ are defined as
\begin{align*}
E(M) &\coloneq \dim \End_{\Lam}(M) + \gen{\g^{\rm proj}(M),\Dim(M)},
\\
E^-(M) &\coloneq \dim \End_{\Lam}(M) + \gen{\g^{\rm inj}(M),\Dim(M)}.
\end{align*}

These $E$-invariants play a major role in the categorification of Fomin-Zelevinsky cluster algebras.
It was already observed in \cite{DWZ10} that for certain finite-dimensional Jacobian algebras
$B = \bLam = \Jacobi_K(Q,W)$ (the condition is that $I(W)$ is an admissible ideal in $KQ$), we have
\begin{align*}
E(M) &= \dim \hom_B(M,\tau_B(M)),
\\
E^-(M) &= \dim \hom_B(\tau_B^-(M),M)
\end{align*}
for all $M \in {\rm mod}(B)$, where $\tau_B$ denote the Auslander-Reiten translation for $B$.

\begin{theorem}[$E$-symmetry Theorem]\label{thm:main6}
Let $\bLam = \Jacobi_K(Q,\tilde{W})$ 
for some reduced hyperpotential $\tilde{W}$ for $Q$.
Then for each $M \in {\rm mod}(\bLam)$ we have
\[
E(M) = E^-(M).
\]
\end{theorem}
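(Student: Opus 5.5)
We compute both sides from Theorem~\ref{thm:main5}. It expresses the $g$-vectors in terms of the complex of Corollary~\ref{cor:Esymm2},
\[
0 \to M(k) \xrightarrow{M(\beta_k)} M_{\rm out}(k) \xrightarrow{M(\gamma_k)} M_{\rm in}(k) \xrightarrow{M(\alpha_k)} M(k) \to 0 .
\]
Write $d_k \coloneq \dim M(k)$. Then $\Dim(M) = (d_k)_k$, since each simple $S_k$ is one-dimensional and concentrated at $k$.

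By Theorem~\ref{thm:main5}, $\g^{\rm proj}_k(M) = h^{(2)}_k(M) - h^{(3)}_k(M)$. The term $h^{(2)}_k$ is the dimension of $\ker M(\alpha_k)/\im M(\gamma_k)$, and $h^{(3)}_k$ is the dimension of $\coker M(\alpha_k)$. Since $M(\alpha_k)$ ends at $M(k)$, this telescopes to
\[
\g^{\rm proj}_k(M) = \dim\coker M(\gamma_k) - d_k = \dim M_{\rm in}(k) - \rk M(\gamma_k) - d_k .
\]
Dually, $\g^{\rm inj}_k(M) = h^{(1)}_k - h^{(0)}_k$. Here $h^{(1)}_k$ is the dimension of $\ker M(\gamma_k)/\im M(\beta_k)$ and $h^{(0)}_k = \dim\ker M(\beta_k)$. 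Since $M(\beta_k)$ starts at $M(k)$, we get
\[
\g^{\rm inj}_k(M) = \dim\ker M(\gamma_k) - d_k = \dim M_{\rm out}(k) - \rk M(\gamma_k) - d_k .
\]
The identity $M(\gamma_k') = M(\gamma_k'')$ from Corollary~\ref{cor:Esymm2}, which rests on Proposition~\ref{prop:Esymm1}, is essential here. It guarantees that the rank subtracted in both formulas is the same. Without it, the relation spaces $M'_{\rm rel}$ and $M''_{\rm rel}$ would play no symmetric role.

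The $\dim\End$ terms cancel, so $E(M) - E^-(M) = \sum_k d_k\,(\g^{\rm proj}_k(M) - \g^{\rm inj}_k(M))$. The rank and $d_k$ terms also cancel, leaving
\[
E(M) - E^-(M) = \sum_k d_k\bigl(\dim M_{\rm in}(k) - \dim M_{\rm out}(k)\bigr).
\]
Now $\dim M_{\rm in}(k) = \sum_{\alpha:\,t(\alpha)=k} d_{s(\alpha)}$ and $\dim M_{\rm out}(k) = \sum_{\beta:\,s(\beta)=k} d_{t(\beta)}$. Hence
\[
\sum_k d_k \dim M_{\rm in}(k) = \sum_{\alpha\in Q_1} d_{t(\alpha)}d_{s(\alpha)} = \sum_k d_k \dim M_{\rm out}(k),
\]
so the difference vanishes.

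The only delicate points are bookkeeping ones. First, we must check that Theorem~\ref{thm:main5} applies to every $M \in {\rm mod}(\bLam)$: $M$ is a $\Lam$-module because it is annihilated by $\overline{I(\tilde W)} \supseteq I(\tilde W)$. Second, by Remark~\ref{remark:dim:min:proj}, the images of the maps are unchanged whether we use $R(\tilde W)$ or generators of the closure. Third, we need finiteness: $M'_{\rm rel}(k)$ and $M''_{\rm rel}(k)$ are finite-dimensional because $R(\tilde W)$ is indexed by the finite set $Q_1$, and in any case only ranks of maps between finite-dimensional spaces enter. Once these are confirmed, the argument above is a direct dimension count.
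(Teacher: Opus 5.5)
Your proposal is correct and follows the paper's proof essentially step for step. You read off $\g^{\rm proj}_k(M)=\dim M_{\rm in}(k)-\rk M(\gamma_k)-\dim M(k)$ and $\g^{\rm inj}_k(M)=\dim M_{\rm out}(k)-\rk M(\gamma_k)-\dim M(k)$ from Theorem~\ref{thm:main5}, match the rank terms via $M(\gamma_k')=M(\gamma_k'')$ from Corollary~\ref{cor:Esymm2}, and conclude with the double count $\sum_{\alpha\in Q_1}\dim M(s(\alpha))\dim M(t(\alpha))$. Your added bookkeeping remarks are all consistent with the paper.
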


\begin{proof}
By Corollary~\ref{cor:Esymm2} we know that $M(\gamma_k) \coloneq M(\gamma_k') = M(\gamma_k'')$,
where the maps $M(\gamma_k')$ and $M(\gamma_k'')$ are defined in terms of the set of relations $R = R(\tilde{W})$.
Thus we get the complex
\[
0 \to M(k) \xrightarrow{M(\beta_k)} M_{\rm out}(k) \xrightarrow{M(\gamma_k)} 
M_{\rm in}(k) \xrightarrow{M(\alpha_k)} M(k) \to 0.
\]

We use Theorem~\ref{thm:main5}
and calculate
\begin{align*}
\gen{\g^{\rm proj}(M),\Dim(M)} &= \sum_{k \in Q_0} \g_k^{\rm proj}(M) \cdot \dim M(k)
\\
&=
\sum_{k \in Q_0}\left(\dim \ker(M(\alpha_k)) - \dim \im(M(\gamma_k)) - \dim \coker(M(\alpha_k))\right) \cdot \dim M(k)
\\
&= \sum_{k \in Q_0} \left(\dim M_{\rm in}(k) - \dim \im(M(\gamma_k)) - \dim M(k)
\right) \cdot \dim M(k)
\end{align*}
and
\begin{align*}
\gen{\g^{\rm inj}(M),\Dim(M)} &= \sum_{k \in Q_0} \g_k^{\rm inj}(M) \cdot \dim M(k)
\\
&=
\sum_{k \in Q_0}\left(\dim \ker(M(\gamma_k)) - \dim \im(M(\beta_k)) - \dim \ker(M(\beta_k))\right) \cdot \dim M(k)
\\
&= \sum_{k \in Q_0} \left(\dim M_{\rm out}(k) - \dim \im(M(\gamma_k)) - \dim M(k)
\right) \cdot \dim M(k).
\end{align*}
(Here we used the usual dimension formulas from Linear Algebra.)
Thus 
\begin{equation}\label{eq:gvectorbilinear}
\gen{\g^{\rm proj}(M),\dim(M)} = \gen{\g^{\rm inj}(M),\dim(M)}
\end{equation} 
if and only if
\[
\sum_{k \in Q_0} \dim M_{\rm in}(k) \cdot \dim M(k) = \sum_{k \in Q_0} \dim M_{\rm out}(k) \cdot \dim M(k).
\]
Since both of these sums are obviously equal to
\[
\sum_{\alpha \in Q_1} \dim M(s(\alpha)) \cdot \dim M(t(\alpha)),
\]
the equality (\ref{eq:gvectorbilinear}) holds.
We conclude that $E(M) = E^-(M)$.
\end{proof}

\section{Examples}

Recall that for
$\Lam = \Gam/\mathfrak{J}$ and $M \in {\rm mod}(\Lam)$ we have for each $k \in Q_0$ a diagram
\begin{equation*}
\xymatrix@+2ex{
M_{\rm rel}'(k)\ar[r]^{M(\gamma_k')} & M_{\rm in}(k)\ar[r]^{M(\alpha_k)}
& M(k)\ar[r]^<<<<<{M(\beta_k)}
& M_{\rm out}(k)\ar[r]^<<<<<{M(\gamma_k'')}
& M_{\rm rel}''(k)
}.
\end{equation*}
For computing a minimal projective presentation (resp. injective presentation) of $M$ we just need
the left (resp. right) half of the diagram.
Recall also that
\begin{align*}
    h_k^{(0)}(M) &\coloneq \dim\ker(M(\beta_k)), \\
    h_k^{(1)}(M) &\coloneq \dim\ker(M(\gamma''_k))/\im(M(\beta_k)),\\ 
    h_k^{(2)}(M) &\coloneq \dim\ker(M(\alpha_k))/\im(M(\gamma'_k)),\\
    h_k^{(3)}(M) &\coloneq \dim\coker(M(\alpha_k)).
\end{align*}
Note that
for Jacobian algebras we have $M_{\rm rel}'(k) = M_{\rm out}(k)$ and $M_{\rm rel}''(k) = M_{\rm in}(k)$.



\begin{example}\label{ex:6.1}
Let $\Lambda=KQ/I$, where $Q$ is the quiver 
    \begin{equation*}
        \begin{tikzcd}[sep=scriptsize]
	        1 & 2 
	        \arrow["\alpha", bend left, from=1-1, to=1-2]
	        \arrow["\beta", bend left, from=1-2, to=1-1]
    \end{tikzcd}
    \end{equation*} and $I = \gen{\alpha\beta\alpha,\beta\alpha\beta}$. 
Note that for the potential $W \coloneq (\alpha\beta)^2$ we have $I = I(W)$ provided ${\rm char}(K) \not= 2$. 
We want to construct a minimal projective presentation of the $\Lam$-module
    \begin{equation*}
        M= \begin{matrix}
            1 \\ 2
        \end{matrix} \; .
    \end{equation*}
We have    
    $M(1) = M_{\text{in}}(2) = K e_1$, 
$M(2) = M_{\text{in}}(1) = K\alpha $, $M_{\rm rel}'(1) = M(2)$, $M_{\rm rel}'(2) = M(1)$ with maps
    \begin{alignat*}{3}
    M(\alpha_1) &=M(\beta)\colon &M(2) &\rightarrow M(1),\quad &\alpha &\mapsto 0, \\
    M(\alpha_2) &= M(\alpha)\colon &M(1)    &\rightarrow M(2), &e_1 &\mapsto \alpha, \\   
    M(\gamma_1')&= M(\alpha\beta) \colon\; &M(2) & \rightarrow M(2), & \alpha &\mapsto 0,\\
    M(\gamma_2')&=  M(\beta\alpha)\colon &M(1) & \rightarrow M(1),&   e_1 &\mapsto  0.
    \end{alignat*}
This yields $\rk(M(\alpha_1)) = \rk(M(\alpha_2)) = 1$, $\rk(M(\gamma_1')) = \rk(M(\gamma_2')) = 0$. 
Thus, we have
\begin{align*}
h_1^{(2)}(M) &= 1-0=1, & h_1^{(3)}(M) &=1-0=1, 
\\
h_2^{(2)}(M) &=0-0=0, & h_2^{(3)}(M) &=1-1=0.
\end{align*}
We obtain a minimal projective presentation of the form
\begin{equation*}
        P_1 \rightarrow P_1 \rightarrow M \rightarrow 0.
    \end{equation*}
More precisely (in the notation of Section~\ref{subsec:5.2}), we have    
    $\ker(M(\alpha_1)) = K\alpha$ and $\im(M(\gamma_1'))=  0$, so $M_{\text{in}}^{(1)}(1)=K\alpha$ and $M^{(0)}(1) = Ke_1$. Similarly, $\ker(M(\alpha_2)) = 0 = \im(M(\gamma_2'))$ and $\im(M(\alpha_2)) = K\alpha$, so $M^{(0)}(2)=0$.

First we obtain now the standard presentation
\[
P_M \xrightarrow{d_M} K_M \xrightarrow{e_M} M \rightarrow 0
\]
with 
\[
P_M= (P_1\otimes\alpha\otimes K e_1) \oplus (P_2\otimes\beta\otimes K \alpha), \quad\text{ and }\quad
K_M = (P_1\otimes K e_1) \oplus (P_2\otimes K \alpha)
\]
and 
\begin{align*}
d_M(e_1\otimes\alpha\otimes e_2) &= \alpha\otimes e_1 - e_2\otimes \alpha, &
e_M(e_1\otimes e_1) &= e_1,\\
d_M(e_2\otimes\beta\otimes\alpha) &=\qquad\qquad\;\beta\otimes\alpha,
 & e_M(e_2\otimes\alpha) &= \alpha.
\end{align*}    
Note, that the $\Lam$-linear maps $d_M$ and $e_M$ are determined by their values on the generators of $P_M$ resp.\@ of $K_M$. In this situation we may choose the summands of $P_M$ resp.\@ $K_M$ as follows:
\begin{align*}
P_M^{(0)} &= P_2\otimes\alpha\otimes Ke_1 & K_M^{(0)} &= P_1\otimes K e_1,\\
P_M^{(1)} &= P_1\otimes\beta\otimes K\alpha & K_M^{(0)} &= P_2\otimes K \alpha,\\
P_M^{(2)} &= 0.
\end{align*}
Recall, that $K'_M:=d_M(P_M^{(0)})$ and we know that $K_M= K_M^{(0)}\oplus K'_M$. 
We obtain by Theorem~\ref{thm:main2} a minimal projective presentation of $M$ by considering
the composition
\[
P_M^{(1)} \xrightarrow{d_M\mid_1} K_M \xrightarrow{\mathrm{pr}} K_M^{(0)}, 
\]
where $\mathrm{pr}$ is the projection along $K'_M$. Now, it is sufficient to observe that
\[
d_M(e_1\otimes\beta\otimes\alpha)+K'_M= \beta\otimes\alpha + K'_M= \beta\alpha\otimes e_1+K'_M
\]
since $\beta\alpha\otimes e_1-\beta\otimes\alpha=d_M(\beta\otimes\alpha\otimes e_1)\in K'_M$.
Thus, we get indeed a minimal projective presentation 
    \[
    P_1 \otimes \beta \otimes K\alpha \xrightarrow{\overline{d}_M}  P_1 \otimes Ke_1 \xrightarrow{e_M^{(0)}} 
    M \to 0
    \]
with 
\[
\overline{d}_M(p \otimes \beta \otimes \alpha) = 
\begin{cases}
\beta\alpha \otimes e_1 & \text{if $p= e_1$},
\\
0 & \text{if $p = \alpha$},
\\
0 &  \text{if $p = \beta\alpha$},
\end{cases}
\]
and
\[
e_M^{(0)}(p \otimes e_1) = M(p)e_1 = 
\begin{cases}
e_1 & \text{if $p= e_1$},
\\
\alpha & \text{if $p = \alpha$},
\\
0 &  \text{if $p = \beta\alpha$}.
\end{cases}
\]
\end{example}

\begin{example}\label{ex:6.2}
Let $\Lam = KQ/I$ be the same algebra as in Examples~\ref{ex:6.1}.
Now we consider 
\[ 
M=P_1 = \begin{matrix} 1\\2\\1\end{matrix}.
\]
We have    
    $M(1) = M_{\text{in}}(2) = K\gen{e_1,\beta\alpha}$, 
$M(2) = M_{\text{in}}(1) = K\alpha$, $M_{\rm rel}'(1) = M(2)$, $M_{\rm rel}'(2) = M(1)$ with maps
\begin{alignat*}{4}
M(\alpha_1)&=M(\beta)\colon   & M(2) &\rightarrow M(1), \quad & \alpha &\mapsto \beta\alpha, \\
M(\alpha_2)&=M(\alpha)\colon  & M(1) &\rightarrow M(2), &e_1 &\mapsto \alpha,\quad &\beta\alpha &\mapsto 0,\\   
M(\gamma_1')&= M(\alpha\beta) \colon\; &M(2) &\rightarrow M(2), &  \alpha&\mapsto 0, \\
M(\gamma_2')&= M(\beta\alpha) \colon &M(1) &\rightarrow M(1), &    e_1 &\mapsto \beta\alpha,  &\beta\alpha &\mapsto 0.
    \end{alignat*}
This yields $\rk(M(\alpha_1)) = \rk(M(\alpha_2)) = 1$, $\rk(M(\gamma_1')) = 0$, $\rk(M(\gamma_2')) = 1$.     
It follows that
\begin{align*}
h_1^{(2)}(M) &= 0, & h_1^{(3)}(M) &=1, 
\\
h_2^{(2)}(M) &=0, & h_2^{(3)}(M) &=0.
\end{align*}
Thus the minimal projective presentation of $M$ is of the form
    \begin{equation*}
        0 \rightarrow P_1 \rightarrow M \rightarrow 0.
    \end{equation*}
\end{example}

\begin{example}
Let $\Lam = K\comp{Q}/\fJ$ where $Q$ is given by 
        \begin{equation*}
            \begin{tikzcd}[sep=scriptsize]
                1 \ar["\beta", loop right, myloop] \ar["\alpha" ', loop left, myloop2] 
            \end{tikzcd}        
        \end{equation*} 
and $\fJ = \gen{\alpha\beta,\beta\alpha}$. Note that $\fJ$ is a closed ideal, i.e. $\bLam = \Lam$,
so that $\Lam$ is a local (commutative)  basic pseudocompact algebra. 
In fact,
it is isomorphic to $K[\![x,y]\!]/(xy)$. 
Note that $P_1 = \Lam$.
We consider the string
module $M$, which is described  by the following diagram:
\[M\coloneq\quad\begin{tikzcd}[cramped,sep=scriptsize]
	&&& 4 && 6 \\
	1 && 3 && 5 \\
	& 2
	\arrow["\alpha"', from=1-4, to=2-3]
	\arrow["\beta"', from=1-4, to=2-5]
	\arrow["\alpha"', from=1-6, to=2-5]
	\arrow["\beta"', from=2-1, to=3-2]
	\arrow["\alpha"', from=2-3, to=3-2]
\end{tikzcd}\]
Thus, the structure maps of $M$ are
\[
M(\alpha)=
\left(\begin{matrix} 
0 &0 &0 &0 &0 &0\\
  &0 &1 &0 &0 &0\\
  &  &0 &1 &0 &0\\
  &  &  &0 &0 &0\\
  &0 &  &  &0 &1\\
  &  &  &  &  &0
\end{matrix}\right)\qquad\text{and}\qquad
M(\beta)=
\left(\begin{matrix} 
0 &  &  &  &  & \\
1 &0 &  &  &0 & \\
0 &0 &0 &  &  & \\
0 &0 &0 &0 &  & \\
0 &0 &0 &1 &0 &\\
0 &0 &0 &0 &0 &0
\end{matrix}\right).
\]
For the unique vertex $k = 1$ of our quiver $Q$ we get
\[
M(\alpha_k)=(M(\alpha), M(\beta)),\quad
M(\beta_k)=\begin{pmatrix}
M(\alpha)\\ M(\beta)    
\end{pmatrix},
\]
\[
M(\gamma_k')=
\begin{pmatrix}
    M(l_\alpha(\alpha\beta)) & M(l_\alpha(\beta\alpha))
    \\ M(l_\beta(\alpha\beta)) &M(l_\beta(\beta\alpha)) 
\end{pmatrix}
=
\begin{pmatrix}
    M(\beta) & 0\\ 0 &M(\alpha) 
\end{pmatrix},
\]
\[
M(\gamma_k'')=
\begin{pmatrix}
    M(r_\alpha(\beta\alpha)) & M(r_\beta(\beta\alpha))
    \\ M(r_\alpha(\alpha\beta)) &M(r_\beta(\alpha\beta)) 
\end{pmatrix}
=
\begin{pmatrix}
    M(\beta) & 0\\ 0 &M(\alpha) 
\end{pmatrix}.
\]
We obtain
\[
\rk(M(\alpha_k))=3,\quad
\rk(M(\beta_k))=4,\quad
\rk(M(\gamma'_k)) = \rk(M(\gamma''_k)) = 5.
\]
With this at hand, we find
\begin{align*}
h_k^{(2)}(M) &=(12-3)-5=4, & h_k^{(3)}(M) &=6-3=3.
\end{align*}
This yields a minimal projective presentation of the form
\[
\Lam^4\rightarrow\Lam^3\rightarrow M\rightarrow 0.
\]
Furthermore, we have
\begin{align*}
h_k^{(0)}(M) &= 6-4=2, & h_k^{(1)}(M) &=(12-5)-4=3,
\end{align*}
This yields a minimal
injective presentation of the form
\[
0\rightarrow M\rightarrow D_2(\Lam)^2\rightarrow D_2(\Lam)^3.
\]

Since $\mathfrak{m}^3 M=0$ we recover for any $l\geq 4$ over the truncation $\Lam_l$ the well-known minimal projective presentation
resp. minimal injective presentation of the string module $M$ over
the special biserial algebra $\Lam_l$ of the same type. 

The discrete-injective module $I_{1,{\rm dis}} = D_2(\Lam)$ has a countable 
$K$-basis $\ldots, v_2, v_1, v_0=w_0, w_1, w_2,\ldots$ with 
\begin{align*}
\alpha\cdot v_k &=v_{k-1}, &\alpha\cdot w_k &=0,\\
\beta\cdot v_k  &=0,       &\beta\cdot w_k &=w_{k-1}
\end{align*}
for all $k \geq 1$ and $\alpha\cdot v_0=\beta\cdot v_0 = 0$.
Since $\Lam$ is left noetherian, one can show that 
$I_{1,{\rm dis}}$ is the injective envelope $I_{1,\Lam}$ of $S_1$ in
$\Mod(\Lam)$.
\end{example}

\bigskip

\subsection*{Statement on AI usage}
Example~\ref{ex:3.4} is based on a similar example (for the $2$-loop quiver), which we found via the AI model ChatGPT Pro.
This is the only place in the paper where AI was used.

\subsection*{Acknowledgements}
JS was partially funded by the Deutsche Forschungsgemeinschaft
(DFG, German Research Foundation) under Germany's Excellence Strategy - GZ 2047/1, Projekt-ID 390685813. CH was funded by the Deutsche Forschungsgemeinschaft (DFG, German Research Foundation) under
Germany's Excellence Strategy - GZ 2047/1, Projekt-ID 390685813. DLF was partially funded by Fondazione CARIPARO through a “Starting Package per attrarre ricercatrici e ricercatori eccellenti dall’estero” (CUP:C93C22008360007) granted to him at Università degli Studi di Padova.

\end{document}